\newtheorem{metatheorem}{metatheorem}[section]
\newtheorem{theorem}[metatheorem]{Theorem}
\newtheorem{lemma}[metatheorem]{Lemma}
\newtheorem{corollary}[metatheorem]{Corollary}
\newtheorem{itdefinition}[metatheorem]{Definition}
\newtheorem{itexample}[metatheorem]{Example}
\newtheorem{itremark}[metatheorem]{Remark}
\newtheorem{itquestion}[metatheorem]{Question}
\newtheorem{itproblem}[metatheorem]{Problem}
\newenvironment{example}%
    {\begin{itexample}\begin{rm}}{\end{rm}\end{itexample}}
\newenvironment{definition}%
    {\begin{itdefinition}\begin{rm}}{\end{rm}\end{itdefinition}}
\newenvironment{remark}%
    {\begin{itremark}\begin{rm}}{\end{rm}\end{itremark}}
    {\begin{itquestion}\begin{rm}}{\end{rm}\end{itquestion}}
    {\begin{itproblem}\begin{rm}}{\end{rm}\end{itproblem}}
\newcommand{\Xd}{(X, d)}
\newcommand{\M}{\mathcal{M}}
\newcommand{\Mzero}{\M_0}
\newcommand{\Mone}{\M_1}
\newcommand{\Moneplus}{\M_1^+}
\newcommand{\Mplus}{\M^+}
\newcommand{\MX}{\M(X)}
\newcommand{\MzeroX}{\M_0(X)}
\newcommand{\MoneX}{\M_1(X)}
\newcommand{\Ezero}{E_0}
\newcommand{\Imu}{I(\mu)}
\newcommand{\Inu}{I(\nu)}
\newcommand{\Imunu}{I(\mu, \nu)}
\newcommand{\N}{\mathbb{N}}
\newcommand{\R}{\mathbb{R}}
\newcommand{\supp}{\mathop{\rm supp}\nolimits}
\newcommand{\mbar}{M}
\newcommand{\ts}{\textstyle}
\newcommand{\nwrefA}[1]{\ref{#1} of~\cite{NW1}}
\begin{document}

\keywords{Compact metric space, finite metric space,
quasihypermetric space, space of negative type, metric embedding,
signed measure, signed measure of mass zero, spaces of measures,
distance geometry, geometric constant}

\subjclass[msc2000]{Primary 51K05; secondary 54E45, 31C45}

\title
{Distance Geometry in Quasihypermetric Spaces.~II}

\author{Peter Nickolas}
\address{School of Mathematics and Applied Statistics,
University of Wollongong, Wollongong, NSW 2522, Australia}
\email{peter\_\hspace{0.8pt}nickolas@uow.edu.au}
\author{Reinhard Wolf}
\address{Institut f\"ur Mathematik, Universit\"at Salz\-burg,
Hellbrunnerstrasse~34, A-5020 Salz\-burg, Austria}
\email{Reinhard.Wolf@sbg.ac.at}

\begin{abstract}
Let $\Xd$ be a compact metric space and let
$\MX$ denote the space of all finite signed Borel measures on~$X$.
Define $I \colon \MX \to \R$ by
\[
\Imu = \int_X \! \int_X d(x,y) \, d\mu(x) d\mu(y),
\]
and set
$
\mbar(X) = \sup \Imu,
$
where $\mu$ ranges over
the collection of signed measures in $\MX$ of total mass~$1$.
This paper, with an earlier and a subsequent paper
[Peter Nickolas and Reinhard Wolf,
\emph{Distance geometry in quasihypermetric spaces.\ I} and~\emph{III}],
investigates the geometric constant~$\mbar(X)$
and its relationship to the metric properties of~$X$ and the
functional-analytic properties of a certain subspace of $\MX$
when equipped with a natural semi-inner product.
Using the work of the earlier paper, this paper explores
measures which attain the supremum defining $\mbar(X)$,
sequences of measures which approximate the supremum when
the supremum is not attained and conditions implying or
equivalent to the finiteness of $\mbar(X)$.
\end{abstract}

\maketitle

\section{Introduction}
\label{Introduction2}

Let $\Xd$ be a compact metric space and let
$\MX$ denote the space of all finite signed Borel measures on~$X$.
Define functionals $I \colon \MX \times \MX \to \R$
and $I \colon \MX \to \R$ by
\[
I(\mu, \nu) = \int_X \! \int_X d(x,y) \, d\mu(x) d\nu(y)
\quad\mbox{and}\quad
\Imu = I(\mu, \mu) = \int_X \! \int_X d(x,y) \, d\mu(x) d\mu(y)
\]
for $\mu, \nu \in \MX$, and set
\[
\mbar(X) = \sup \Imu,
\]
where $\mu$ ranges over $\MoneX$,
the collection of signed measures in $\MX$ of total mass~$1$.

Our interest in this paper and in the earlier and later papers \cite{NW1}
and~\cite{NW3} is in the properties of the geometric constant $\mbar(X)$.
In~\cite{NW1}, we observed that if $\Xd$ does not have the quasihypermetric
property, then $\mbar(X)$ is infinite, and thus the context of
our study for the most part is that of quasihypermetric spaces.
Recall (see \cite{NW1}) that $\Xd$ is \textit{quasihypermetric} if
for all $n \in \N$, all $\alpha_1, \ldots, \alpha_n \in \R$ satisfying
$\sum_{i=1}^n \alpha_i = 0$, and all $x_1, \ldots, x_n \in X$, we have
\[
\sum_{i,j=1}^n \alpha_i \alpha_j d(x_i, x_j) \leq 0.
\]
(Other authors refer to quasihypermetric spaces, or their metrics,
as of \textit{negative type}; see, for example, \cite{Bret}
and~\cite{HKM}.)
It is straightforward to confirm that a compact metric space $\Xd$
is quasihypermetric if and only if $\Imu \leq 0$ for all
$\mu \in \MzeroX$, the subspace of $\MX$ consisting of
all signed measures of mass~$0$ (see Theorem~\nwrefA{qhmconds}).

In the presence of the quasihypermetric property,
a natural semi-inner product space structure becomes available
on $\MzeroX$. Specifically, for $\mu, \nu \in \MzeroX$, we define
\[
(\mu \mid \nu) = -\Imunu,
\]
and we denote the resulting semi-inner product space by $\Ezero(X)$.
The associated seminorm $\|\cdot\|$ on $\Ezero(X)$
is then given by
\[
\| \mu \| = \bigl[ -\Imu \bigr]^\frac{1}{2}.
\]

The semi-inner product space $\Ezero(X)$ is in many ways the key
to our analysis of the constant~$\mbar(X)$.
In~\cite{NW1}, we developed the properties of
$\Ezero(X)$ in a detailed way, exploring in particular
the properties of several operators and functionals associated
with $\Ezero(X)$, some questions related to its topology,
and the question of completeness. Questions directly relating
to the constant~$\mbar(X)$ were examined in~\cite{NW1} only
when they had a direct bearing on this general analysis.

In this paper, we use the framework provided by our work in~\cite{NW1}
to deal directly and in some detail with questions about~$\mbar(X)$.
Specifically, we discuss
\begin{enumerate}
\item[(1)]
maximal measures, that is, measures which attain the supremum defining $\mbar(X)$,
\item[(2)]
sequences of measures which approximate the supremum when
no maximal measure exists, and
\item[(3)]
conditions implying or equivalent to the finiteness of~$\mbar(X)$.
\end{enumerate}

We assume here that the reader has read~\cite{NW1}, and we repeat
its definitions and results here only as necessary. Also, in~\cite{NW1}
the background to our work, and in particular the contributions
of other authors
(see \cite{AandS, Bjo, FR1, Hin1, Wol1}, for example),
was discussed in some detail,
and this discussion will not be repeated here.
The paper~\cite{NW3} deals with further questions about~$\mbar(X)$,
relating especially to metric embeddings of~$X$ and the properties
of~$\mbar(X)$ when $X$ is a finite metric space.

\section{Definitions and Notation}
\label{chapter2a}

Let $\Xd$ (abbreviated when possible to~$X$) be a compact metric space. 
The diameter of~$X$ is denoted by~$D(X)$.
We denote by~$C(X)$ the Banach space of all real-valued
continuous functions on~$X$ equipped with the usual sup-norm.
Further,
\begin{itemize}
\item
$\MX$ denotes the space of all finite signed Borel measures on~$X$,
\item
$\Mzero(X)$ denotes the subspace of $\MX$
consisting of all measures of total mass~$0$,
\item
$\Mone(X)$ denotes the affine subspace of $\MX$
consisting of all measures of total mass~$1$,
\item
$\Mplus(X)$ denotes the set of all positive measures in~$\MX$, and
\item
$\Moneplus(X)$ denotes the intersection of $\Mplus(X)$ and $\Mone(X)$,
the set of all probability measures on~$X$.
\end{itemize}
For $x \in X$, the atomic measure at~$x$ is denoted by~$\delta_x$.

The functionals $I(\cdot, \cdot)$ and $I(\cdot)$ defined earlier
play a central role in our work.
A related functional $J(\cdot)$ on $\MX$ is
defined for each $\mu \in \MX$ by $J(\mu)(\nu) = \Imunu$
for $\nu \in \MX$.
For $\mu \in \MX$, the function $d_\mu \in C(X)$ is defined by
\[
d_\mu(x) = \int_X d(x, y) \, d\mu(y)
\]
for $x \in X$. Finally, as noted earlier, we define
\[
\mbar(X) = \sup \bigl\{ \Imu: \mu \in \Mone(X) \bigr\}.
\]

\section{Maximal and Invariant Measures}
\label{maxinvmeasures}

We call a measure $\mu \in \Mone(X)$ \textit{maximal} if $\Imu = \mbar(X)$,
and we call a measure $\mu \in \MX$ \textit{$d$-invariant}
if there exists $c \in \R$ such that $d_\mu(x) = c$ for all $x \in X$;
the number~$c$ is then called the \textit{value} of~$\mu$.

Our first result deals with the relationship between maximal
and invariant measures. (Recall from Definition~\nwrefA{sqhmdefn}
that a compact quasihypermetric space $\Xd$ is said to be strictly quasihypermetric
if $\Imu = 0$ only when $\mu = 0$, for $\mu \in \MzeroX$.)

\begin{theorem}
\label{2.2}
Let $\Xd$ be a compact metric space.
\begin{enumerate}
\item
If $\mu \in \Mone(X)$ is a maximal measure, then $\mu$ is $d$-invariant
with value $\mbar(X)$.
\item
If $X$ is quasihypermetric and if $\mu \in \Mone(X)$ is $d$-invariant
with value~$c$, then $\mu$ is maximal and $\mbar(X) = c$.
\item
If $X$ is strictly quasihypermetric, then there can exist
at most one maximal measure in~$\Mone(X)$.
\item
If $X$ is strictly quasihypermetric, then there can exist
at most one $d$-invariant measure in~$\MoneX$.
\end{enumerate}
\end{theorem}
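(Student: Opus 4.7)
The four parts are closely intertwined, and throughout the plan is to exploit the bilinearity identity
\[
I(\mu+\sigma) = \Imu + 2I(\mu,\sigma) + I(\sigma)
\]
together with the Fubini consequence $I(\mu,\nu) = \int_X d_\mu \, d\nu$. The substantive work sits in part~(1); parts (2)--(4) then follow by quadratic expansion combined with the defining inequalities of (strict) quasihypermetricity.

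For (1), I would run the standard variational argument. Fix a maximal $\mu \in \MoneX$ and arbitrary $x_0, y_0 \in X$, and form the perturbation $\mu_t := \mu + t(\delta_{x_0} - \delta_{y_0})$. Since $\delta_{x_0} - \delta_{y_0}$ has mass zero, $\mu_t \in \MoneX$ for every $t \in \R$, so maximality gives $I(\mu_t) \leq \Imu$ for all $t$. Expanding,
\[
I(\mu_t) - \Imu = 2t\bigl(d_\mu(x_0) - d_\mu(y_0)\bigr) + t^2 I(\delta_{x_0} - \delta_{y_0}),
\]
and examining the sign of the linear term for small $|t|$ of either sign forces $d_\mu(x_0) = d_\mu(y_0)$. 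Since $x_0, y_0$ are arbitrary, $d_\mu \equiv c$ for some $c \in \R$, and then $\mbar(X) = \Imu = \int_X d_\mu \, d\mu = c$.

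For (2), assume $\mu \in \MoneX$ is $d$-invariant of value~$c$ and $X$ is quasihypermetric. Given any $\nu \in \MoneX$, set $\sigma := \nu - \mu \in \MzeroX$; then
\[
\Inu = \Imu + 2I(\mu,\sigma) + I(\sigma) = c + 2c\,\sigma(X) + I(\sigma) = c + I(\sigma),
\]
using that $\int d_\mu \, d\sigma = c \,\sigma(X) = 0$. The quasihypermetric property gives $I(\sigma) \leq 0$, hence $\Inu \leq c = \Imu$; so $\mu$ is maximal with $\mbar(X) = c$. For (3), if $\mu, \nu \in \MoneX$ are both maximal, part (1) gives $d_\mu \equiv d_\nu \equiv \mbar(X)$, so $d_{\mu-\nu} \equiv 0$ and $I(\mu - \nu) = \int d_{\mu-\nu}\, d(\mu-\nu) = 0$; strict quasihypermetricity applied to $\mu - \nu \in \MzeroX$ yields $\mu = \nu$. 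For (4), if $\mu, \nu \in \MoneX$ are both $d$-invariant with respective values $c_\mu, c_\nu$, then $d_{\mu-\nu} \equiv c_\mu - c_\nu$ is a constant, so
\[
I(\mu - \nu) = (c_\mu - c_\nu)(\mu - \nu)(X) = 0
\]
because $\mu - \nu \in \MzeroX$; strict quasihypermetricity again forces $\mu = \nu$.

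The only conceptually delicate step is the variational argument in~(1), and even there the obstacle is mild because admissibility of $\mu_t$ for \emph{every} $t \in \R$ (not merely for small $t$) makes the Euler condition immediate. Everything else is bookkeeping on the quadratic expansion of $I$ coupled with the definition of (strict) quasihypermetricity.
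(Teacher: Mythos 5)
Your proof is correct, and its overall architecture matches the paper's: a variational argument for (1), a quadratic expansion combined with the quasihypermetric inequality for (2), and difference-of-measures plus strictness for the uniqueness assertions. Two details differ, both legitimately. In (1) you perturb by the mass-zero measure $t(\delta_{x_0}-\delta_{y_0})$, which yields constancy of $d_\mu$ first and then identifies the constant via $\Imu = \int d_\mu\,d\mu$; the paper instead perturbs along the segment $\alpha\mu+(1-\alpha)\delta_x$ with a carefully chosen $\alpha$, which pins down $d_\mu(x)=\mbar(X)$ pointwise directly. Your version is arguably cleaner, since the linear term in $t$ does all the work and no choice of $\epsilon$ or $\alpha$ is needed. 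In (4) you argue directly that $d_{\mu-\nu}$ is constant, so $I(\mu-\nu)=(c_\mu-c_\nu)(\mu-\nu)(X)=0$ and strictness forces $\mu=\nu$; the paper simply derives (4) from (2) and (3). Your route has the small advantage of showing that (4) needs neither the maximality machinery nor part (2), only that the two invariant measures have equal total mass.
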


\begin{proof}
(1)
We may clearly assume that $X$ is non-singleton, so that $\mbar(X) > 0$.
Let $\mu \in \Mone(X)$ be maximal.
Assume first that $d_\mu(x) > \mbar(X)$ for some $x \in X$.
Choose $\epsilon > 0$ such that $d_\mu(x) >\mbar(X) + \epsilon$,
and let $\alpha = \mbar(X) / (\mbar(X) + \epsilon)$.
Then for $\mu_{\alpha} \in \Mone(X)$ defined by
$\mu_{\alpha} = \alpha \mu + (1- \alpha) \delta_x$, we find
\begin{eqnarray*}
I (\mu_{\alpha})
 & = & \alpha^{2} I (\mu) + 2 \alpha (1-\alpha) \, d_\mu (x) \\*
 & = & \alpha^2 \mbar(X) + 2 \alpha (1-\alpha)\, d_\mu (x) \\
 & > & \alpha^2 \mbar(X) + 2 \alpha (1-\alpha)
       \bigl( \mbar(X) + \epsilon \bigr) \\
 & = & \mbar(X) \bigl( (\alpha - 1)^2 + 1 \bigr) \\*
 & > & \mbar(X),
\end{eqnarray*}
a contradiction.

Now assume that $d_\mu (x) < \mbar(X)$ for some $x \in X$.
Choose \mbox{$\epsilon > 0$} such that $d_\mu (x) < \mbar(X) - \epsilon$
and $\mbar(X) -2\epsilon > 0$,
and let $\alpha = \mbar(X) / (\mbar(X) - \epsilon)$.
Then for $\mu_{\alpha} \in \Mone(X)$ defined by
$\mu_{\alpha} = \alpha \mu + (1 - \alpha) \delta_x$, we find as before
that $I (\mu_{\alpha}) > \mbar(X)$, a contradiction.
It follows that $d_\mu(x) = \mbar(X)$ for all $x \in X$.

\smallskip
(2)
Let $d_\mu$ be a constant function on~$X$ with value $c \in \R$,
for some $\mu \in \Mone(X)$. For any $\nu \in \Mone(X)$,
we have $2 I(\mu, \nu) \geq I (\mu) + I (\nu)$
(see Theorem~\nwrefA{qhmconds}), and so
\begin{eqnarray*}
2c
 & = & 2c\nu (X) \\*
 & = & 2 \nu (d_\mu) \\
 & = & 2 I (\mu, \nu) \\
 & \geq & I (\mu) + I (\nu) \\
 & = & \mu (d_\mu) + I (\nu) \\
 & = & \mu (X) c + I (\nu) \\*
 & = & c + I (\nu).
\end{eqnarray*}
Therefore $\Inu \leq c$. Finally, $\Imu = c$ implies
$\mbar(X) = c$, so we conclude that $\mu$ is maximal.

\smallskip
(3)
Let $\mu$ and $\nu$ be two maximal
measures in $\Mone(X)$. Part~(1) implies that $d _\mu (x) = d_\nu (x)
= \mbar(X)$ for all $x \in X$. Therefore, if $\varphi = \mu - \nu$,
we have $\varphi \in \Mzero(X)$ and
$I(\varphi) = \varphi (d_{\varphi}) = \varphi(0) = 0$,
and hence $\varphi = 0$.

\smallskip
(4) This follows from (2) and~(3).
\end{proof}

Consider the strictly quasihypermetric space $X = [a,b]$,
with the usual metric. Theorem~\ref{2.2} gives
a completely elementary proof that $\mbar([a,b]) < \infty$
(compare \cite[Lemma~3.5]{AandS}).

\begin{corollary}
\label{2.3}
Let $X = [a, b]$, where $a, b \in \R$ and $a < b$, and let
$d(x, y) = |x - y|$ for all $x, y \in [a, b]$.
Then $\mbar([a, b]) = (b-a)/2$.
\end{corollary}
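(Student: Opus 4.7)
The plan is to invoke Theorem~\ref{2.2}(2), which reduces the computation of $\mbar(X)$ on any quasihypermetric space to the task of exhibiting a $d$-invariant measure in $\Mone(X)$: once such a measure with value~$c$ is produced, Theorem~\ref{2.2}(2) automatically delivers $\mbar(X) = c$. Since $[a,b]$ is (strictly) quasihypermetric, as noted in the paragraph preceding the corollary, it suffices to find a single measure in $\Mone([a,b])$ whose potential $d_\mu$ is constant, and to read off its value.

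The natural candidate is the two-point probability measure
\[
\mu = \tfrac{1}{2} \delta_a + \tfrac{1}{2} \delta_b,
\]
which clearly lies in $\Moneplus([a,b]) \subseteq \Mone([a,b])$. The computation of $d_\mu$ is essentially trivial: for any $x \in [a,b]$,
\[
d_\mu(x) = \tfrac{1}{2} |x - a| + \tfrac{1}{2} |x - b| = \tfrac{1}{2} \bigl( (x-a) + (b-x) \bigr) = \tfrac{b-a}{2},
\]
using only the fact that $a \leq x \leq b$. Thus $\mu$ is $d$-invariant with value $(b-a)/2$.

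Applying Theorem~\ref{2.2}(2) to this $\mu$ then yields $\mbar([a,b]) = (b-a)/2$ immediately. There is no real obstacle in this argument; the whole point of the corollary is to showcase that Theorem~\ref{2.2} trivialises what is otherwise (as in \cite[Lemma~3.5]{AandS}) a more involved calculation. If one wanted to underline the point, one could additionally observe that by Theorem~\ref{2.2}(3) this $\mu$ is the unique maximal measure on $[a,b]$, but this is not needed for the stated conclusion.
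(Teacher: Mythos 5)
Your proof is correct and is essentially identical to the paper's: both take $\mu = \frac{1}{2}(\delta_a + \delta_b)$, verify $d_\mu \equiv (b-a)/2$, and apply Theorem~\ref{2.2} part~(2).
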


\begin{proof}
Let $\mu \in \Mone([a, b])$
be defined by $\mu = \frac{1}{2} (\delta_a + \delta_b)$. Clearly,
we have $d_\mu (x) = (b-a)/2$ for all $x \in [a, b]$. Therefore,
by Theorem~\ref{2.2} part~(2), we have
$\mbar([a, b]) = (b-a)/2$.
\end{proof}

Furthermore, we can apply Theorem~\ref{2.2} to the quasihypermetric
but not strictly quasihypermetric space $X = S^1$,
the circle of radius~$1$, equipped with the arc-length metric
(see Example~\nwrefA{nonstrict}). Indeed, an identical argument
and conclusion apply to the sphere $S^{n-1}$ in~$\R^n$
equipped with the great-circle metric.

\begin{corollary}
\label{2.4}
Let $X = S^1$, the circle of radius~$1$,
equipped with the arc-length metric.
Then we have $\mbar(X) = \frac{\pi}{2}$.
Moreover, $X$ has multiple maximal/$d$-invariant measures.
\end{corollary}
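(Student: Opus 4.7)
The plan is to apply Theorem~\ref{2.2}(2). Since $S^1$ is (quasi)hypermetric (see Example~\nwrefA{nonstrict}), it suffices to exhibit a measure $\mu \in \Mone(S^1)$ that is $d$-invariant with value $\pi/2$; by part~(2) of the theorem this will immediately give $\mbar(S^1) = \pi/2$ and that $\mu$ is maximal.

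The canonical candidate is the normalized arc-length (Haar) measure $\mu_0$ on~$S^1$. By the joint rotational invariance of $\mu_0$ and of the arc-length metric, the function $x \mapsto d_{\mu_0}(x)$ is constant on~$S^1$, so $\mu_0$ is automatically $d$-invariant. To find the value it suffices to compute $d_{\mu_0}$ at a single point: parametrizing $S^1$ by arc-length $t \in [0, 2\pi]$ measured from~$x$, the arc-length distance satisfies $d(x,y) = \min(t, 2\pi - t)$, so
\[
d_{\mu_0}(x) = \frac{1}{2\pi} \int_0^{2\pi} \min(t, 2\pi - t) \, dt = \frac{\pi}{2}.
\]
Theorem~\ref{2.2}(2) then yields $\mbar(S^1) = \pi/2$ and that $\mu_0$ is maximal.

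For the second assertion I would exhibit a second family of $d$-invariant mass-$1$ measures. For each $x \in S^1$ let $-x$ denote the antipodal point and set $\mu_x = \frac{1}{2}(\delta_x + \delta_{-x})$. For any $y \in S^1$, the shorter arcs from $y$ to $x$ and from $y$ to $-x$ together form a semicircle, so $d(y,x) + d(y,-x) = \pi$; hence $d_{\mu_x}(y) = \pi/2$ for every~$y$. Thus each $\mu_x$ is $d$-invariant with value $\pi/2$, and by Theorem~\ref{2.2}(2) each is maximal. Since distinct antipodal pairs give distinct measures (and each $\mu_x$ is clearly different from the absolutely continuous $\mu_0$), $S^1$ admits infinitely many maximal/$d$-invariant measures, as required.

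There is no real obstacle here: once Theorem~\ref{2.2}(2) is in hand, the corollary reduces to the elementary geometric facts $\int_0^{2\pi}\min(t, 2\pi - t)\,dt = \pi^2$ and $d(y, x) + d(y, -x) = \pi$, both immediate from the arc-length structure of $S^1$. (The same argument, using uniform measure on $S^{n-1}$ and antipodal pairs, would give the analogous statement for the higher-dimensional spheres with the great-circle metric, once rotational invariance is invoked to reduce the computation of $d_{\mu_0}$ to a one-variable integral.)
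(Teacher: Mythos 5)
Your proof is correct and follows essentially the same route as the paper: exhibit a $d$-invariant measure of mass~$1$ with value $\pi/2$ and invoke Theorem~\ref{2.2} part~(2), with the antipodal-pair measures $\frac{1}{2}(\delta_x + \delta_{-x})$ (which the paper uses exclusively) supplying both the value of $\mbar(S^1)$ and the non-uniqueness. Your additional computation with the normalised arc-length measure is a harmless extra witness but is not needed.
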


\begin{proof}
Let $x_1$ and $y_1$ be any pair of diametrically
opposite points in~$X$ and let $\mu \in \Mone(X)$ be defined by
$\mu = \frac{1}{2}(\delta_{x_1} + \delta_{y_1})$. Clearly,
we have $d_\mu(x) = \frac{\pi}{2}$ for all $x \in X$, and hence
$\mbar(X) = \frac{\pi}{2}$, by Theorem~\ref{2.2} part~(2).
Since $x_1$ and $y_1$ can be chosen arbitrarily, the second claim holds.
\end{proof}

\begin{example}
\label{2.5}
Consider the compact strictly quasihypermetric space $X = B^3$,
the closed ball of radius~$1$ in~$\R^3$,
with the usual euclidean metric.
It is shown in~\cite{Alex1}
that $\mbar(X) = 2$ and that $\Imu < 2$ for all $\mu \in \Mone(X)$,
and that there therefore exists no maximal or invariant measure on~$X$.
\end{example}

The next result will provide us with a fruitful source
of examples and counterexamples in our later work.

\begin{theorem}
\label{2.1}
Let $(X, d_1)$ and $(Y, d_2)$
be compact metric spaces with $X \cap Y = \emptyset$
and $\mbar(X), \mbar(Y) < \infty$.
Let $Z = X \cup Y$, and define $d \colon Z \times Z \to \R$ by setting
\[
d(x, y)
 =
\begin{cases}
  \begin{array}{ll}
    d_1 (x, y), & \mbox{for } x, y \in X, \\
    d_2(x, y), & \mbox{for } x, y \in Y, \\
    c, & \mbox{for } x \in X, y \in Y,
  \end{array}
\end{cases}
\]
where $c \in \R$ is such that $2c \geq \max (D(X), D(Y))$.
Then we have the following.
\begin{enumerate}
\item
$(Z, d)$ is a compact metric space.
\item
If $X$ and~$Y$ are quasihypermetric,
then $(Z, d)$ is quasihypermetric if and only if
$2c \geq \mbar(X) + \mbar(Y)$.
\item
If $X$ and~$Y$ are strictly quasihypermetric,
then $(Z, d)$ is strictly quasihypermetric if and only if
$2c \geq \mbar(X) + \mbar(Y)$ and
\begin{itemize}
\item[(a)] $2c > \mbar(X) + \mbar(Y)$ or
\item[(b)] $X$ has no maximal measure or
\item[(c)] $Y$ has no maximal measure.
\end{itemize}
\end{enumerate}
\end{theorem}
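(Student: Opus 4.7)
The plan is to decompose any signed measure $\mu$ on $Z$ as $\mu = \mu_X + \mu_Y$, where $\mu_X$ and $\mu_Y$ are the restrictions of $\mu$ to $X$ and $Y$ respectively. Since $d \equiv c$ on $X \times Y$, writing $a = \mu_X(X)$ yields the fundamental identity
\[
I(\mu) = I(\mu_X) + I(\mu_Y) + 2c\,\mu_X(X)\,\mu_Y(Y),
\]
which reduces to $I(\mu) = I(\mu_X) + I(\mu_Y) - 2ca^2$ whenever $\mu \in \Mzero(Z)$, since then $\mu_Y(Y) = -a$. Almost all of the work in parts (2) and (3) will flow from this identity.

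For part (1), all metric axioms except the triangle inequality are immediate from the hypothesis. For a triple straddling both summands, the inequality reduces either to $d_i(x,z) \leq 2c$ (which follows from $D(X), D(Y) \leq 2c$) or to the trivial $c \leq c + d_i(y,z)$. Since the inter-set distance $c$ is positive, $X$ and $Y$ sit as clopen compact subsets of $Z$, so $Z$ is compact.

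For part (2), I would split on whether $a = 0$. If $a = 0$, then $\mu_X \in \Mzero(X)$ and $\mu_Y \in \Mzero(Y)$, so $I(\mu_X), I(\mu_Y) \leq 0$ and $I(\mu) \leq 0$ regardless of $c$. If $a \neq 0$, normalizing to obtain $a^{-1}\mu_X \in \Mone(X)$ and $(-a)^{-1}\mu_Y \in \Mone(Y)$ gives
\[
I(\mu) \leq a^2 \bigl( \mbar(X) + \mbar(Y) - 2c \bigr),
\]
non-positive precisely when $2c \geq \mbar(X) + \mbar(Y)$. For the converse direction, if $2c < \mbar(X) + \mbar(Y)$, pick $\nu_X \in \Mone(X)$ and $\nu_Y \in \Mone(Y)$ with $I(\nu_X) + I(\nu_Y) > 2c$; then $\mu = \nu_X - \nu_Y \in \Mzero(Z)$ satisfies $I(\mu) > 0$.

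For part (3), necessity of $2c \geq \mbar(X) + \mbar(Y)$ is inherited from part (2). Suppose this inequality holds and $I(\mu) = 0$ for some $\mu \in \Mzero(Z)$. When $a = 0$, strict quasihypermetricity of $X$ and $Y$ forces $\mu_X = \mu_Y = 0$. When $a \neq 0$, the chain
\[
0 = I(\mu_X) + I(\mu_Y) - 2ca^2 \leq a^2 \bigl( \mbar(X) + \mbar(Y) - 2c \bigr) \leq 0
\]
collapses to equality throughout, which forces both $2c = \mbar(X) + \mbar(Y)$ and the normalized measures $a^{-1}\mu_X$, $(-a)^{-1}\mu_Y$ to be maximal on $X$ and $Y$; this configuration is excluded by any of (a), (b), (c). Conversely, when $2c = \mbar(X) + \mbar(Y)$ and maximal measures $\nu_X, \nu_Y$ exist on both factors, the measure $\nu_X - \nu_Y$ is a nonzero element of $\Mzero(Z)$ with $I$-value zero, so $(Z,d)$ is not strictly quasihypermetric. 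The main obstacle will be the equality-case bookkeeping in part (3), where one must cleanly match each source of slack in the displayed chain with the corresponding clause of (a)--(c).
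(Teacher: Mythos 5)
Your proposal is correct and follows essentially the same route as the paper: the same decomposition $\mu = \mu_X + \mu_Y$, the same identity $I(\mu) = I(\mu_X) + I(\mu_Y) + 2c\,\mu_X(X)\mu_Y(Y)$, the same case split on $a = \mu_X(X)$ with normalization to $\MoneX$ and $\Mone(Y)$, and the same converse constructions $\nu_X - \nu_Y$. The equality-case bookkeeping you flag as the main obstacle works exactly as you describe and is how the paper handles part (3).
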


\begin{proof}
It is straightforward to check that $(Z, d)$ is a compact metric space.

Consider $\mu \in \Mzero(Z)$.
Then we have $\mu = \mu_1 + \mu_2$, with $\supp(\mu_1)\subseteq X$ and
$\supp(\mu_2) \subseteq Y$, so we can regard $\mu_1, \mu_2$ as
members of $\MX, \M(Y)$, respectively, and since $\mu \in \Mzero (Z)$
we have $0 = \mu_1 (Z) + \mu_2 (Z) = \mu_1(X) + \mu_2(Y)$.
Note that
$\Imu = I(\mu_1 + \mu_2) = I(\mu_1) + I(\mu_2) + 2c\mu_1(X) \mu_2(Y)$.

Suppose that $\mu_1(X) = 0$, so that $\mu_2(Y) = 0$ also.
If $X$ and~$Y$ are quasihypermetric, we therefore have
$\Imu = I(\mu_1) + I(\mu_2) \leq 0$,
and if $X$ and~$Y$ are moreover strictly quasihypermetric,
then $\Imu = 0$ implies $I(\mu_1) = I(\mu_2) = 0$,
and hence we have
$\mu_1 = \mu_2 = 0$, and so $\mu = 0$.

Now suppose that $\mu_1(X) \neq 0$, so that $\mu_2(Y) = -\mu_1(X) \neq 0$.
Then we find
\begin{eqnarray*}
\Imu
 & = & \mu^2_1(X) I \Bigl(\frac{\mu_1}{\mu_1(X)}\Bigr)
     + \mu^2_2(Y) I \Bigl(\frac{\mu_2}{\mu_2(Y)}\Bigr)
     + 2c \mu_1 (X) \mu_2 (Y) \\
 & = & \mu^2_1(X) \left[I \Bigl(\frac{\mu_1}{\mu_1(X)}\Bigr)
     + I \Bigl(\frac{\mu_2}{\mu_2(Y)}\Bigr)-2c \right] \\
 & \leq & \mu^2_1(X) \left[\mbar(X) + \mbar(Y) - 2c \right]. 
\end{eqnarray*}
If $2c \geq \mbar(X) + \mbar(Y)$, it follows immediately
that $\Imu \leq 0$, and also that $\Imu < 0$ if $\mbar(X) + \mbar(Y) < 2c$
or $I(\mu_1/\mu_1 (X)) < \mbar(X)$ or
$I(\mu_2/\mu_2(Y))< \mbar(Y)$.
This proves the reverse implications in (2) and~(3).

For the forward implication in~(2), suppose that $X$ and~$Y$ are quasihypermetric
and that $2c < \mbar(X) + \mbar(Y)$.
Then there exist $\mu_1 \in \MoneX$ and $\mu_2 \in \Mone(Y)$ such that
$2c < I(\mu_1) + I(\mu_2)$. But now $\mu = \mu_1 -\mu_2 \in \Mzero(Z)$, and
\[
\Imu = I(\mu_1) + I(\mu_2) - 2I(\mu_1, \mu_2) > 2c - 2I(\mu_1, \mu_2) = 0,
\]
and hence $Z$ is not quasihypermetric.

For the forward implication in~(3),
suppose that $X$ and~$Y$ are strictly quasihypermetric.
By part~(2), if $2c < \mbar(X) + \mbar(Y)$,
then $Z$ is not (strictly) quasihypermetric,
so let us assume that $2c \geq \mbar(X) + \mbar(Y)$ and that conditions
(a), (b) and~(c) in~(3) are false.
Thus we have $2c = \mbar(X) + \mbar(Y)$ and there exist
maximal measures $\mu_1 \in \MoneX$ and $\mu_2 \in \Mone(Y)$.
But now $\mu = \mu_1 - \mu_2 \in \Mzero(Z)$ is non-zero, and
\[
\Imu
 = I(\mu_1) + I(\mu_2) - 2I(\mu_1, \mu_2)
 = \mbar(X) + \mbar(Y) - 2c
 = 0,
\]
and hence $Z$ is not strictly quasihypermetric.
\end{proof}

\begin{theorem}
\label{2.1.4}
Let the metric spaces $(X, d_1)$, $(Y, d_2)$ and $(Z, d)$,
and the constant~$c$ satisfying $2c \geq \max (D(X), D(Y))$,
be as in Theorem~\ref{2.1}, with $X$ and~$Y$ quasihypermetric.
Suppose that $\mu_1 \in \MoneX$, $\mu_2 \in \Mone(Y)$
are invariant measures.
Then
\[
\mu = (\mbar(Y) - c) \mu_1 + (\mbar(X) - c) \mu_2
\]
is an invariant measure on~$Z$ with value $\mbar(X) \mbar(Y) -c^2$.
Further, if $X$ and~$Y$ are strictly quasihypermetric,
then $\mu \in \Mzero(Z)$ if and only if $Z$~is quasihypermetric
but not strictly quasihypermetric.
\end{theorem}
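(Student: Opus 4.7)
The plan is to handle the two assertions separately, both essentially by direct calculation. For the invariance claim, I would compute $d_\mu(z)$ at an arbitrary $z \in Z$ by splitting the integral over $X$ and $Y$, using that $\supp(\mu_1) \subseteq X$ and $\supp(\mu_2) \subseteq Y$. The key reduction is that, by the definition of the glued metric, $d(z, \cdot)$ equals $d_1(z, \cdot)$ on $X$ and equals~$c$ on~$Y$ (when $z \in X$, and symmetrically for $z \in Y$).

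First, take $z \in X$. Then
\[
d_\mu(z) = (\mbar(Y) - c)\, d_{\mu_1}(z) + (\mbar(X) - c) \, c\, \mu_2(Y).
\]
Since $X$ is quasihypermetric and $\mu_1$ is $d_1$-invariant in $\MoneX$, Theorem~\ref{2.2}(2) identifies the value of $\mu_1$ as~$\mbar(X)$, i.e.\ $d_{\mu_1} \equiv \mbar(X)$. With $\mu_2(Y) = 1$, algebra then gives $d_\mu(z) = \mbar(X)\mbar(Y) - c^2$. A symmetric computation for $z \in Y$, using the invariance of $\mu_2$ with value $\mbar(Y)$, yields the same constant. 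Hence $\mu$ is $d$-invariant on $Z$ with value $\mbar(X)\mbar(Y) - c^2$.

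For the biconditional, I would first compute the total mass:
\[
\mu(Z) = (\mbar(Y) - c)\mu_1(X) + (\mbar(X) - c)\mu_2(Y) = \mbar(X) + \mbar(Y) - 2c,
\]
so $\mu \in \Mzero(Z)$ if and only if $2c = \mbar(X) + \mbar(Y)$. Now suppose $X$ and $Y$ are strictly quasihypermetric. Since $\mu_1$ and $\mu_2$ are invariant, Theorem~\ref{2.2}(2) again gives that they are maximal, so both $X$ and $Y$ do possess maximal measures; conditions~(b) and~(c) of Theorem~\ref{2.1}(3) therefore fail. Combining Theorem~\ref{2.1}(2) and~(3), $Z$ is quasihypermetric but not strictly so exactly when $2c \geq \mbar(X) + \mbar(Y)$ holds and the strict-quasihypermetric disjunction fails, which (given that (b) and (c) are excluded) reduces precisely to $2c = \mbar(X) + \mbar(Y)$. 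This matches the mass-zero condition on $\mu$, completing the equivalence.

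There isn't really a hard step here; the only thing to be careful about is cleanly invoking Theorem~\ref{2.2}(2) at the right two moments — once to extract the values $\mbar(X)$ and $\mbar(Y)$ of the invariant measures~$\mu_1, \mu_2$, and again, in the biconditional, to observe that under the strict-quasihypermetric hypothesis the mere existence of invariant measures on $X$ and $Y$ forces conditions (b) and~(c) of Theorem~\ref{2.1}(3) to be vacuous, collapsing the strictness criterion to the single numerical condition $2c > \mbar(X) + \mbar(Y)$.
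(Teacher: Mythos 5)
Your proposal is correct and follows essentially the same route as the paper: the invariance claim is the same direct computation of $d_\mu$ on $X$ and on $Y$ (using Theorem~\ref{2.2}(2) to identify the values of $\mu_1,\mu_2$ as $\mbar(X),\mbar(Y)$), and the biconditional is reduced, via Theorem~\ref{2.1} and Theorem~\ref{2.2}(2), to the condition $2c=\mbar(X)+\mbar(Y)$, which matches $\mu(Z)=\mbar(X)+\mbar(Y)-2c=0$. The paper merely states these steps more tersely; you have filled in the same details correctly.
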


\begin{proof}
The first statement is proved by a straightforward calculation.
For the second, we note that by Theorem~\ref{2.1}
and Theorem~\ref{2.2} part~(2), $Z$~is quasihypermetric
but not strictly quasihypermetric
if and only if $2c = \mbar(X) + \mbar(Y)$,
from which the statement follows.
\end{proof}

\begin{remark}
\label{nonqhmcase}
In the presence of the quasihypermetric property,
Theorem~\ref{2.2} above shows that invariant measures
are maximal, and conversely.
When $X$ is not quasihypermetric, on the other hand,
Theorem~\nwrefA{2.6} (see Theorem~\ref{2.6new} below)
shows that $\mbar(X) = \infty$, and that the notion
of a maximal measure is therefore meaningless.
In the following result, we show that a non-quasihypermetric
space may nevertheless have a non-trivial invariant measure.
\end{remark}

\begin{theorem}
\label{2.9A}
There exists a $5$-point non-quasihypermetric space
with an invariant probability measure of value~$1$.
\end{theorem}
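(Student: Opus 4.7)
The plan is to build the example by gluing two small equidistant metric spaces via Theorem~\ref{2.1}. I would take $X = \{a_1, a_2\}$ with $d_1(a_1, a_2) = 2$ and $Y = \{b_1, b_2, b_3\}$ with $d_2(b_i, b_j) = 2$ for $i \neq j$; both are (strictly) quasihypermetric, and a standard calculation for equidistant spaces gives $\mbar(X) = 1$ and $\mbar(Y) = \frac{4}{3}$, attained by the uniform probability measures $\mu_1 = \frac{1}{2}(\delta_{a_1} + \delta_{a_2})$ and $\mu_2 = \frac{1}{3}(\delta_{b_1} + \delta_{b_2} + \delta_{b_3})$.

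Next I would form $Z = X \cup Y$ with cross-distance $c = 1$ in the sense of Theorem~\ref{2.1}, obtaining a 5-point space that is (isometric to) $K_{2,3}$ with its graph metric. Since $2c = 2 = \max(D(X), D(Y))$, the hypotheses of Theorem~\ref{2.1} are met; and since $2c = 2 < \frac{7}{3} = \mbar(X) + \mbar(Y)$, part~(2) of Theorem~\ref{2.1} yields that $(Z, d)$ is not quasihypermetric.

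To exhibit the invariant probability measure of value~$1$, I would take $\mu_1$ itself, regarded now as an element of~$\Moneplus(Z)$, and verify directly that $d_{\mu_1}(a_i) = \frac{1}{2}(0 + 2) = 1$ and $d_{\mu_1}(b_j) = \frac{1}{2}(1 + 1) = 1$. Equivalently, applying Theorem~\ref{2.1.4} to the invariant measures $\mu_1, \mu_2$ yields the invariant measure $(\mbar(Y) - c)\mu_1 + (\mbar(X) - c)\mu_2 = \frac{1}{3}\mu_1$ on~$Z$ (the $\mu_2$ term drops because $\mbar(X) = c$), of mass and value both~$\frac{1}{3}$, which after normalization is exactly~$\mu_1$ of value~$1$.

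There is no serious obstacle once the parameters are chosen: the construction requires only three jointly compatible conditions, namely $2c \geq \max(D(X), D(Y))$ (so that $d$ is a metric), $2c < \mbar(X) + \mbar(Y)$ (so that $Z$ is non-quasihypermetric by Theorem~\ref{2.1}), and $\mbar(X) = c$ (so that the invariant measure supplied by Theorem~\ref{2.1.4} is already a positive multiple of~$\mu_1$, and hence, after normalization, a probability measure of value~$1$). The example above realises all three with minimal data.
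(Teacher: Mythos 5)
Your construction is exactly the one in the paper: the same two equidistant spaces $X$ and $Y$ with distance $2$, the same values $\mbar(X)=1$ and $\mbar(Y)=\frac{4}{3}$, the same gluing with $c=1$ via Theorem~\ref{2.1}, and the same invariant probability measure $\mu_1$ of value~$1$ on the resulting $5$-point space. The proposal is correct; the explicit verification of $d_{\mu_1}\equiv 1$ and the alternative derivation via Theorem~\ref{2.1.4} are harmless additions to what is otherwise the paper's own argument.
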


\begin{proof}
For $X = \{ x_1, x_2 \}$ and $Y = \{y_1, y_2, y_3\}$, define
\[
d_1(x_i, x_j)
 =
\begin{cases}
  0 & i = j, \\
  2 & i \neq j,
\end{cases}
\qquad
d_2 (y_i, y_j)
 =
\begin{cases}
  0 & i = j, \\
  2 & i \neq j.
\end{cases}
\]
Then $(X, d_1)$ and $(Y, d_2)$ are compact quasihypermetric spaces and
$\mu_1 = \frac{1}{2}(\delta_{x_1} + \delta_{x_2}) \in \Mone(X)$
and
$\mu_2 = \frac{1}{3}(\delta_{y_1} + \delta_{y_2} + \delta_{y_3}) \in \Mone(Y)$
are invariant measures. Theorem~\ref{2.2} part~(2) then
implies that they are maximal measures, so that we have
$\mbar(X) = 1$ and $\mbar(Y) = \frac{4}{3}$.

Now let $Z = X \cup Y$, and define $d \colon Z \times Z \to \R$
as in Theorem~\ref{2.1}, with $c = 1$.
Then parts (1) and~(2) of Theorem~\ref{2.1} imply that $(Z, d)$
is non-quasihypermetric, and we find that $\mu_1 \in \Moneplus(Z)$
satisfies $d_{\mu_1}(z) = 1$ for all $z \in Z$.
\end{proof}

\section{Maximal and Invariant Sequences}
\label{maxinvsequences}

\begin{definition}
\label{definitionA}
Let ($X, d$) be a compact quasihypermetric space with
$\mbar(X) < \infty$. A sequence $\mu_n$ in $\Mone(X)$ is called
\textit{maximal} if $I(\mu_n) \to \mbar(X)$ as $n \to \infty$.
\end{definition}

\begin{remark}
\label{remarkB1}
While Example~\ref{2.5} shows that maximal measures may not exist
under the assumption that $\mbar(X) < \infty$, it is of course
immediate from the definition that maximal sequences always exist.
\end{remark}

We noted in section~\ref{topologies} of~\cite{NW1} that
when $\mbar(X) < \infty$ there is a natural extension of the semi-inner
product on $\Ezero(X) = \Mzero(X)$ to a semi-inner product
on the space $\M(X)$ of all signed Borel measures on~$X$.
Specifically, we define
\[
(\mu \mid \nu) = (\mbar(X) + 1) \mu(X) \nu(X) -\Imunu
\]
for $\mu, \nu \in \MX$, and we denote the resulting semi-inner product
space by $E(X)$. This space plays a role in the arguments below.

\begin{remark}
\label{remarkB2}
In the context of the semi-inner product space $E(X)$,
maximal sequences and maximal measures have the following
natural interpretation.
\begin{enumerate}
\item[(1)]
A sequence $\mu_n$ in $\Mone(X)$ is maximal if and only if
$\| \mu_n \| \to \mbox{dist}(0, \Mone(X)) = 1$
as $n \to \infty$, where $\mbox{dist}(0, \Mone(X))$
denotes the distance of the zero measure to the closed affine
subspace $\Mone(X)$ (see Corollary~\nwrefA{2.10.6}).
\item[(2)]
A measure $\mu \in \Mone(X)$ is maximal if and
only if
\[
\| \mu \| = \mbox{dist}(0, \Mone(X)) = 1.
\]
\end{enumerate}
The preceding assertions follow immediately from the observation that
$\| \mu \|^2 = \mbar(X) + 1 - I(\mu )$
for all $\mu \in \Mone(X)$.
\end{remark}

\begin{remark}
\label{remarkB3}
A measure $\mu$ in $\Mone(X)$ is maximal if and only
if there exists a maximal sequence $\mu_n$ in $\Mone(X)$
such that $\| \mu_n - \mu \| \to 0$ as $n \to \infty$.
For if $\mu_n$ is such a sequence,
then $\| \mu \| \leq \| \mu_n \| + \| \mu_n - \mu \|$
for all $n \in \N $, so $\| \mu \| \leq 1$
by Remark~\ref{remarkB2} part~(1), and the maximality of~$\mu$
follows by Remark~\ref{remarkB2} part~(2).
\end{remark}

Recall that if $X$ is a compact quasihypermetric space, then maximal
measures in $\Mone(X)$, if they exist, are characterized by
the property that they are $d$-invariant on~$X$ (see Theorem~\ref{2.2}).
However, there exist spaces~$X$ with $\mbar(X) < \infty$
but without maximal measures (see Example~\ref{2.5}).
In the light of these facts, we make the following definition.

\begin{definition}
\label{2.24}
Let $\Xd$ be a compact quasihypermetric space. A sequence $\mu_n$
in $\Mone(X)$ is called \textit{$d$-invariant with value~$c$},
for some $c \in \R$, if
\begin{enumerate}
\item[(1)]
$\| \mu_n - \mu_m \| \to 0$ as $n, m \to \infty$, and
\item[(2)]
$d_{\mu_n} \to c \cdot \underline{1}$ in $C(X)$ as $n \to \infty$,
where $\underline{1} \in C(X)$ is defined by $\underline{1}(x): = 1$
for all $x \in X$.
\end{enumerate}
\end{definition}

We wish now to investigate the relationship between maximal
and invariant sequences (cf.~Theorem~\ref{2.2} above).
We need first the following three lemmas.

\begin{lemma}
\label{lemmaC}
Let $\Xd$ be a compact quasihypermetric space with $\mbar(X) < \infty$.
A sequence $\mu_n$ in $\Mone(X)$ is maximal if and only if
$\| \mu_n - \mu_m \| \to 0$ as $n, m \to \infty$ and
$( \mu_n \mid \nu) \to 0$ as $n \to \infty$ for all $\nu \in \Ezero(X)$.
\end{lemma}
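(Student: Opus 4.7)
The plan is to translate maximality and the two stated conditions into the geometry of the semi-inner product space $E(X)$, using the identities $\|\mu\|^2 = \mbar(X) + 1 - I(\mu)$ for $\mu \in \Mone(X)$ and $(\mu \mid \nu) = -I(\mu, \nu)$ for $\mu \in \Mone(X)$, $\nu \in \Ezero(X)$ (both immediate from the definition of the inner product on $E(X)$, using $\nu(X) = 0$), together with Remark~\ref{remarkB2}(1): maximality of $\mu_n$ is the assertion $\|\mu_n\| \to 1 = \mbox{dist}(0, \Mone(X))$. A related input is the lower bound $\|\mu\| \geq 1$ for every $\mu \in \Mone(X)$, which follows from these identities and the definition of $\mbar(X)$.

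For the forward direction, assume $\mu_n$ is maximal. Condition~(a) will drop out of the parallelogram identity
\[
\|\mu_n - \mu_m\|^2 = 2\|\mu_n\|^2 + 2\|\mu_m\|^2 - 4\bigl\|\tfrac{1}{2}(\mu_n + \mu_m)\bigr\|^2
\]
combined with $\bigl\|\tfrac{1}{2}(\mu_n + \mu_m)\bigr\| \geq 1$, valid because $\tfrac{1}{2}(\mu_n + \mu_m) \in \Mone(X)$. For (b), I would fix $\nu \in \Ezero(X)$ and exploit $\mu_n + t\nu \in \Mone(X)$ for every $t \in \R$: the bound $I(\mu_n + t\nu) \leq \mbar(X)$ expands to the quadratic inequality
\[
t^2 \bigl(-I(\nu)\bigr) + 2t(\mu_n \mid \nu) + \bigl(\mbar(X) - I(\mu_n)\bigr) \geq 0 \quad\text{for all } t \in \R.
\]
When $I(\nu) < 0$, the nonpositive-discriminant condition yields $(\mu_n \mid \nu)^2 \leq \bigl(-I(\nu)\bigr)\bigl(\mbar(X) - I(\mu_n)\bigr)$, which tends to $0$; when $I(\nu) = 0$, the affine-in-$t$ inequality holding for all $t \in \R$ forces $(\mu_n \mid \nu) = 0$ outright.

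For the reverse direction, (a) makes $\|\mu_n\|$ Cauchy in $\R$, so $\|\mu_n\| \to L$ for some $L \geq 1$, and the goal is to squeeze $L \leq 1$. Fixing an arbitrary $\mu_* \in \Mone(X)$, I set $\nu_n := \mu_n - \mu_* \in \Ezero(X)$; by (a), $\nu_n$ is Cauchy in $E(X)$, so $\|\nu_n\| \to K$ for some $K \geq 0$. For each fixed $k$, condition~(b) applied to the fixed element $\nu_k \in \Ezero(X)$ gives
\[
(\nu_n \mid \nu_k) = (\mu_n \mid \nu_k) - (\mu_* \mid \nu_k) \longrightarrow -(\mu_* \mid \nu_k)
\]
as $n \to \infty$. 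Polarization converts this into
\[
\lim_{n \to \infty} \|\nu_n - \nu_k\|^2 = K^2 + \|\nu_k\|^2 + 2(\mu_* \mid \nu_k),
\]
and then letting $k \to \infty$ forces $(\mu_* \mid \nu_k) \to -K^2$, since the left-hand side goes to $0$ by (a) and $\|\nu_k\|^2 \to K^2$. Substituting into $\|\mu_n\|^2 = \|\mu_*\|^2 + 2(\mu_* \mid \nu_n) + \|\nu_n\|^2$ yields $L^2 = \|\mu_*\|^2 - K^2 \leq \|\mu_*\|^2$; as $\mu_*$ was arbitrary, $L \leq \inf_{\mu_* \in \Mone(X)} \|\mu_*\| = 1$, finishing the argument.

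The main obstacle will be the double-limit manoeuvre at the end of the reverse direction: (b) only controls $(\mu_n \mid \nu)$ for a fixed $\nu$, so one cannot directly examine the ``diagonal'' quantity $(\mu_* \mid \nu_n)$ that appears in the expansion of $\|\mu_n\|^2$. The device is to combine (b) (with fixed $\nu = \nu_k$) and polarization to compute $\lim_n \|\nu_n - \nu_k\|^2$, and then let $k \to \infty$ using (a) to identify $\lim_k (\mu_* \mid \nu_k) = -K^2$, after which the final estimate is a single line.
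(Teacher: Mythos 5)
Your proposal is correct and follows essentially the same route as the paper's proof: the parallelogram identity combined with $\|\mu\| \geq 1$ on $\Mone(X)$ for the Cauchy property, the quadratic-in-$t$ inequality from $\mu_n + t\nu \in \Mone(X)$ for the orthogonality condition, and a weak-limit/minimizing-sequence computation for the converse. The only differences are cosmetic: your discriminant bound gives $(\mu_n \mid \nu) \to 0$ slightly more directly than the paper's two-step argument, and your double-limit identification of $\lim_k (\mu_* \mid \nu_k) = -K^2$ is an equivalent repackaging of the paper's $\epsilon$-estimate for $\|\mu\|^2 \geq \beta^2 - \epsilon$.
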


\begin{proof}
The assertion is a well-known fact about semi-inner product spaces,
but for completeness we include a proof.
Let $\mu_n$ be a maximal sequence in $\Mone(X)$. Since 
$\| \mu \|^2 = \mbar(X) + 1 - I(\mu ) \geq 1$ for all $\mu \in \Mone(X)$
(see Remark~\ref{remarkB2}), we have
\begin{eqnarray*}
\frac{\| \mu_n - \mu_m \|^2}{4}
 & = &
\frac{\| \mu_n\|^2 + \| \mu_m \|^2}{2}
  - \Big \| \frac{\mu_n + \mu_m}{2} \Big \|^2 \\
 & \leq &
\frac{\| \mu_n \| ^2 + \| \mu_m \| ^2}{2} - 1, 
\end{eqnarray*}
for all $n, m$, and we conclude by Remark~\ref{remarkB2} part~(1) that
$\| \mu_n - \mu_m \| \to 0$ as $n, m \to \infty$.

Let $\nu \in \Ezero(X)$. From the fact that $\| \mu_n - \mu_m \| \to 0$
as $n, m \to \infty$ it follows that $(\mu_n \mid \nu ) \to \alpha$
as $n \to \infty$, for some $\alpha \in \R$.
Then since $1 \leq \| \mu_n + t \nu \| ^2$ for all $t \in \R$,
it follows that $0 \leq 2 t \alpha + t^2 \| \nu \| ^2$
for all $t \in \R$, and hence $\alpha = 0$.

Conversely, let $\mu_n$ be a sequence in $\Mone(X)$
such that $\| \mu_n - \mu_m \| \to 0$ as $n, m \to \infty$ and
$( \mu_n \mid \nu ) \to 0$ as $n \to \infty$ for all $\nu \in \Ezero(X)$.
Since the measures~$\mu_n$ form a Cauchy sequence in~$\MX$,
the norms~$\|\mu_n\|$ form a convergent sequence in~$\R$,
and we define $\beta := \lim_{n\to\infty} \| \mu_n \|$.
Fix $\mu \in \Mone(X)$ and $\epsilon > 0$.
Now choose $N \in \N$ such that
$\| \mu_N \| ^2 \geq \beta ^2 - \frac{\epsilon}{2}$ and
$(\| \mu \| + \| \mu_N \|) \| \mu_N - \mu_m \| < \frac{\epsilon}{4}$
for all $m \geq N$. Then
\begin{eqnarray*}
\| \mu \|^2
 & = &
\| \mu - \mu_N \| ^2 + \| \mu_N \| ^2
    + 2 (\mu - \mu_N \mid \mu_N ) \\
 & = &
\| \mu - \mu_N \| ^2 + \| \mu_N \| ^2
   + 2 (\mu - \mu_N \mid \mu_N - \mu_m) + 2 (\mu - \mu_N \mid \mu_m) \\
 & \geq &
\| \mu_N \| ^2 - 2 (\| \mu \| + \|\mu_N \|)  \| \mu_N - \mu_m \|
  + 2 (\mu - \mu_N \mid \mu_m) \\
 & \geq &
\beta ^2 - \epsilon + 2(\mu - \mu_N \mid \mu_m)
\end{eqnarray*}
for all $m \geq N$, and, using the fact that $\mu - \mu_N \in \Ezero(X)$,
we let $m \to \infty$, obtaining $\| \mu \| ^2 \geq \beta ^2 - \epsilon$.
But $\mu$ and~$\epsilon$ were arbitrary, so it follows that
$\| \mu \| \geq \lim_n \| \mu_n \|$ for all $\mu \in \Mone(X)$.
Therefore, $\| \mu_n \| \to \mbox{dist}(0, \Mone(X))$,
and so, by Remark~\ref{remarkB2} part~(1), we are done.
\end{proof}

\begin{lemma}
\label{2.25}
Let $\Xd$ be a compact quasihypermetric space.
If there exist a sequence $\mu_n$ in $\Mone(X)$ and constants
$\alpha, \beta \in \R$ such that $I(\mu_n) \to \alpha$ and
$d_{\mu_n} \to \beta \cdot \underline{1}$ in $C(X)$ as $n \to \infty$,
then $\mbar(X) \leq 2 \beta - \alpha < \infty$.
\end{lemma}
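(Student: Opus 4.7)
The plan is to apply the fundamental quasihypermetric inequality
\[
2 I(\mu, \nu) \ge I(\mu) + I(\nu)
\]
(valid for $\mu, \nu \in \Mone(X)$ by Theorem~\nwrefA{qhmconds}, as already used in the proof of Theorem~\ref{2.2} part~(2)) to the pairs $(\mu_n, \nu)$ for an arbitrary fixed $\nu \in \Mone(X)$, and then to pass to the limit as $n \to \infty$.

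First I would note that by Fubini's theorem (applied to the Jordan decompositions of $\mu_n$ and $\nu$),
\[
I(\mu_n, \nu) = \int_X d_{\mu_n}(x) \, d\nu(x).
\]
Since $d_{\mu_n} \to \beta \cdot \underline{1}$ uniformly on the compact space~$X$, and $\nu$ is a fixed finite signed Borel measure of total mass~$1$, the elementary estimate $\bigl| \int_X f \, d\nu \bigr| \le \|f\|_\infty \, |\nu|(X)$ applied to $f = d_{\mu_n} - \beta \cdot \underline{1}$ shows that $I(\mu_n, \nu) \to \beta \cdot \nu(X) = \beta$ as $n \to \infty$.

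Next, applying the quasihypermetric inequality to each pair $(\mu_n, \nu)$ and letting $n \to \infty$ gives $2\beta \ge \alpha + I(\nu)$, i.e., $I(\nu) \le 2\beta - \alpha$. Since $\nu \in \Mone(X)$ was arbitrary, taking the supremum yields $\mbar(X) \le 2\beta - \alpha$, and the finiteness $2\beta - \alpha < \infty$ is automatic from $\alpha, \beta \in \R$.

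I do not anticipate any serious obstacle here: both steps are routine applications of tools already established. The only minor point to be careful about is the justification that uniform convergence $d_{\mu_n} \to \beta \cdot \underline{1}$ suffices to pass $I(\mu_n, \nu)$ to its limit against the fixed finite signed measure~$\nu$, which is immediate from the total-variation bound noted above.
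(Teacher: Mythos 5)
Your proposal is correct and follows essentially the same route as the paper's proof: both apply the quasihypermetric inequality $2I(\mu_n,\nu)\ge I(\mu_n)+I(\nu)$ for a fixed $\nu\in\Mone(X)$, use uniform convergence of $d_{\mu_n}$ to conclude $I(\mu_n,\nu)\to\beta$, and pass to the limit before taking the supremum over~$\nu$. The only difference is that you spell out the total-variation estimate justifying the limit $I(\mu_n,\nu)\to\beta$, which the paper treats as immediate.
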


\begin{proof}
Let $\mu$ be in $\Mone(X)$.
Since $d_{\mu_n} \to \beta \cdot \underline{1}$ in $C(X)$,
we have $I(\mu, \mu_n) \to \beta$.
But it is an easy consequence of the quasihypermetric property
(see part~(5) of Theorem~\nwrefA{qhmconds}) that 
$2 I (\mu, \mu_n) \geq \Imu + I(\mu_n)$ for all $n \in \N$,
which implies that $\Imu \leq 2 \beta - \alpha$,
and hence we have $\mbar(X) \leq 2 \beta - \alpha < \infty$.
\end{proof}

\begin{lemma}
\label{2.26}
Let $\Xd$ be a compact quasihypermetric space
with $\mbar(X) < \infty$. If $\mu_n$ in $\Mone(X)$
is a $d$-invariant sequence with value~$c$,
then $I(\mu_n) \to c$ as $n \to \infty$.
\end{lemma}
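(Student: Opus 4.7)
The approach is to analyze the real sequence $a_n := I(\mu_n)$ by combining the two ingredients of $d$-invariance. The starting identity is
\[
\| \mu_n - \mu_m \|^2 = -I(\mu_n - \mu_m) = 2 I(\mu_n, \mu_m) - I(\mu_n) - I(\mu_m),
\]
which is legitimate since $\mu_n - \mu_m \in \Mzero(X)$. Thus $a_n + a_m = 2 I(\mu_n, \mu_m) - \|\mu_n - \mu_m\|^2$, and by hypothesis~(1) of Definition~\ref{2.24} the last term tends to~$0$ as $n, m \to \infty$.

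The second step is to exploit hypothesis~(2). For each fixed $n$, $\mu_n$ is a finite signed Borel measure, so $|\mu_n|(X) < \infty$, and the estimate
\[
| I(\mu_n, \mu_m) - c |
 = \biggl| \int_X (d_{\mu_m}(x) - c) \, d\mu_n(x) \biggr|
 \leq \| d_{\mu_m} - c \cdot \underline{1} \|_{C(X)} \, |\mu_n|(X)
\]
shows that $I(\mu_n, \mu_m) \to c$ as $m \to \infty$ for each fixed~$n$. Substituting into the identity above, for each fixed $n$ we obtain $a_n + a_m \to 2c - \lim_m \|\mu_n - \mu_m\|^2$ as $m \to \infty$ (provided the latter limit exists), and in any case $\limsup_m a_m \leq 2c - a_n$ and $\liminf_m a_m \geq 2c - a_n - \limsup_m \|\mu_n - \mu_m\|^2$.

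The third step is a limsup/liminf argument. Writing $L = \limsup_n a_n$ and $\ell = \liminf_n a_n$, the inequality $\limsup_m a_m \leq 2c - a_n$ holds for every sufficiently large $n$, so taking limsup over $n$ gives $L \leq 2c - L$, i.e.\ $L \leq c$. For the other direction, given $\epsilon > 0$ one chooses $N$ so that $\|\mu_n - \mu_m\|^2 < \epsilon$ whenever $n, m \geq N$; then for any fixed $n \geq N$, $\liminf_m a_m \geq 2c - a_n - \epsilon$, and taking liminf over $n \geq N$ gives $\ell \geq 2c - \ell - \epsilon$, hence $\ell \geq c - \epsilon/2$. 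Letting $\epsilon \to 0$ yields $\ell \geq c$. Combined with $\ell \leq L \leq c$, this forces $\ell = L = c$, i.e.\ $I(\mu_n) \to c$.

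The main obstacle is the mismatch in modes of convergence: the Cauchy property gives a joint statement in $(n,m)$, whereas the uniform convergence $d_{\mu_n} \to c$ only yields $I(\mu_n, \mu_m) \to c$ in one index at a time (and the constant in the total-variation estimate $|\mu_n|(X)$ depends on~$n$). The limsup/liminf bookkeeping above is precisely what is needed to bridge that gap; an alternative route would be to first show that $a_n$ is itself Cauchy (by using the identity twice to eliminate an auxiliary index) and then identify the limit, but the limsup/liminf method is cleaner and avoids an explicit uniformity argument.
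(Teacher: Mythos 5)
Your proof is correct, and it rests on the same two pillars as the paper's: freeze one index so that the uniform convergence $d_{\mu_m}\to c\cdot\underline{1}$ can be integrated against a \emph{fixed} finite measure, and use the Cauchy property in the seminorm to control the remainder. The execution differs in a worthwhile way. The paper splits $I(\mu_n)=I(\mu_n,\mu_n-\mu_N)+I(\mu_n,\mu_N)$, bounds the first term by Cauchy--Schwarz in $E(X)$ (using that a Cauchy sequence has bounded seminorms, $\|\mu_n\|\le K$), and runs a direct $\epsilon$-argument; you instead use the exact identity $I(\mu_n)+I(\mu_m)=2I(\mu_n,\mu_m)-\|\mu_n-\mu_m\|^2$ and do limsup/liminf bookkeeping, which avoids Cauchy--Schwarz and the boundedness step altogether. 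One small imprecision: from ``$L\le 2c-a_n$ for all large $n$'' you should take the \emph{infimum} (equivalently the $\liminf$) of the right-hand side over $n$, or pass to a subsequence along which $a_n\to L$, to get $L\le 2c-L$; taking the $\limsup$ of $2c-a_n$ literally gives $2c-\ell$, yielding only $L+\ell\le 2c$. This does not damage the proof (that weaker inequality combined with your $\ell\ge c$ still forces $\ell=L=c$), but the wording should be fixed. You might also record explicitly that $L\le \mbar(X)<\infty$ and that $\ell>-\infty$ follows from $\ell\ge 2c-a_n-\epsilon$ with $a_n\le\mbar(X)$, so that the arithmetic with $L$ and $\ell$ is legitimate.
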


\begin{proof}
Let $\epsilon > 0$. By assumption, there exists $N \in \N$
such that $\| \mu_n - \mu_m \| < \epsilon$ for all $n, m \geq N$,
and there exists $K > 0$ such that $\| \mu_n \| \leq K$
for all $n \in \N$. Therefore, for all $n > N$ we have
\begin{eqnarray*}
\bigl| I (\mu_n) - c \bigr|
 & \leq &
\bigl| I (\mu_n, \mu_n - \mu_N) \bigr| + \bigl| I(\mu_n, \mu_N) - c \bigr| \\
 & = &
\bigl| (\mu_n \mid \mu_n - \mu_N) \bigr|
    + \bigl| \mu_N (d_{\mu_n}) - c \bigr| \\
 & \leq &
\| \mu_n \| \cdot \| \mu_n - \mu_N \|
    + \bigl| \mu_N (d_{\mu_n}) - c \bigr| \\
 & \leq &
\epsilon \cdot K + \bigl| \mu_N (d_{\mu_n}) - c \bigr|.
\end{eqnarray*}
But since $d_{\mu_n} \to c \cdot \underline{1}$ in $C(X)$,
we have $\mu_N (d_{\mu_n}) \to c$ as $n \to \infty$, and the result follows.
\end{proof}

Now we can prove the following counterpart of Theorem~\ref{2.2}
for sequences of measures.

\begin{theorem}
\label{theoremD}
Let $\Xd$ be a compact quasihypermetric space.
Then we have the following.
\begin{enumerate}
\item[(1)]
If $\mbar(X) < \infty$ and $\mu_n$ is a maximal sequence
in $\Mone(X)$, then $\mu_n$~is a $d$-invariant sequence
with value $\mbar(X)$.
\item[(2)]
If $\mu_n$ is a $d$-invariant sequence in $\Mone(X)$ with value~$c$,
then $\mbar(X) = c < \infty$ and $\mu_n$ is a maximal sequence.
\end{enumerate}
\end{theorem}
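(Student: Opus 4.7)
The plan is to handle the two parts by rather different means, since they require quite different tools.

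For part~(1), I would first invoke Lemma~\ref{lemmaC}, which immediately supplies the Cauchy condition $\|\mu_n - \mu_m\| \to 0$ required by Definition~\ref{2.24}. The remaining task is to prove the uniform convergence $d_{\mu_n} \to \mbar(X) \cdot \underline{1}$ in~$C(X)$. For this I would adapt the perturbation argument from the proof of Theorem~\ref{2.2}~(1) to the asymptotic setting: if the uniform convergence failed, then after passing to a subsequence there would exist $\epsilon > 0$ and $x_n \in X$ with $d_{\mu_n}(x_n)$ bounded away from $\mbar(X)$ by at least~$\epsilon$ from one side. Setting $\mu'_n = \alpha \mu_n + (1-\alpha)\delta_{x_n} \in \Mone(X)$ with $\alpha = \mbar(X)/(\mbar(X) \pm \epsilon)$ and repeating the calculation of $I(\mu'_n) = \alpha^2 I(\mu_n) + 2\alpha(1-\alpha) d_{\mu_n}(x_n)$ from Theorem~\ref{2.2}~(1) gives a positive margin $\eta$ between $\mbar(X)$ and the resulting bound, with $\eta$ depending only on $\mbar(X)$ and~$\epsilon$. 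Since $I(\mu_n) = \mbar(X) + o(1)$, replacing the exact value $I(\mu_n) = \mbar(X)$ used in Theorem~\ref{2.2}~(1) by its asymptotic version perturbs the estimate by only $o(1)$, which is dominated by~$\eta$ for large~$n$; this yields $I(\mu'_n) > \mbar(X)$, contradicting the supremum property. (In the ``below'' case one must exclude the trivial singleton space and take $\epsilon$ small enough that $\mbar(X) - 2\epsilon > 0$, exactly as in the original proof.)

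Part~(2) is essentially immediate from the preceding two lemmas. Lemma~\ref{2.26} gives $I(\mu_n) \to c$, and substituting $\alpha = \beta = c$ into Lemma~\ref{2.25} yields $\mbar(X) \leq 2c - c = c < \infty$. The reverse inequality $c \leq \mbar(X)$ is immediate, since $I(\mu_n) \leq \mbar(X)$ for every~$n$. Combining these, $c = \mbar(X)$, and then $I(\mu_n) \to \mbar(X)$ is precisely the statement that $\mu_n$ is maximal.

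The principal obstacle is in part~(1): the margin $\eta$ produced by the Theorem~\ref{2.2}~(1) estimate must be uniform in~$n$ (depending only on $\mbar(X)$ and~$\epsilon$) so that it genuinely dominates the vanishing quantity $|\mbar(X) - I(\mu_n)|$. This amounts to a careful rewriting of the inequalities in the original proof while tracking the uniform constants, which is routine but slightly delicate, especially in the below case.
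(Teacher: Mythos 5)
Your part~(1) is correct and takes a genuinely different route from the paper's. The paper obtains uniform convergence of $d_{\mu_n}$ by citing a continuity result from the first paper (seminorm-Cauchy implies $d_{\mu_n}$ Cauchy in $C(X)$), identifies the limit function as a constant via the orthogonality relations $(\mu_n \mid \delta_x - \delta_y) \to 0$ supplied by Lemma~\ref{lemmaC}, and then pins down the constant with Lemma~\ref{2.26}; your uniformized perturbation argument instead delivers $d_{\mu_n} \to \mbar(X)\cdot\underline{1}$ in one stroke and is more self-contained. The margin in the Theorem~\ref{2.2}~(1) computation is $\mbar(X)(\alpha-1)^2 = \mbar(X)\epsilon^2/(\mbar(X)\pm\epsilon)^2$, which depends only on $\mbar(X)$ and~$\epsilon$, so the error term $\alpha^2\bigl(\mbar(X) - I(\mu_n)\bigr) = o(1)$ is indeed eventually dominated and the contradiction closes.

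Part~(2), however, is circular as written. Lemma~\ref{2.26} carries the hypothesis $\mbar(X) < \infty$, and that hypothesis is not decorative: its proof works in the extended semi-inner product space $E(X)$, whose norm $\|\mu\|^2 = (\mbar(X)+1)\mu(X)^2 - I(\mu)$ on measures of nonzero total mass is only defined when $\mbar(X)$ is finite (both the bound $\|\mu_n\| \le K$ and the Cauchy--Schwarz step live there). In part~(2) the finiteness of $\mbar(X)$ is precisely one of the conclusions to be established, so you cannot invoke Lemma~\ref{2.26} to get $I(\mu_n) \to c$ before finiteness is known. The missing idea is to extract a convergent $I(\mu_n)$ without Lemma~\ref{2.26}: fix $x \in X$ and observe that condition~(1) of Definition~\ref{2.24} makes $\mu_n - \delta_x$ a Cauchy sequence in $\Ezero(X)$, whose seminorm exists for any compact quasihypermetric space; hence $\|\mu_n - \delta_x\| \to \alpha$ for some $\alpha \ge 0$, and since $\|\mu_n - \delta_x\|^2 = 2 d_{\mu_n}(x) - I(\mu_n)$ with $d_{\mu_n}(x) \to c$, this gives $I(\mu_n) \to 2c - \alpha^2$. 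Now Lemma~\ref{2.25}, which assumes nothing about finiteness, yields $\mbar(X) \le 2c - (2c - \alpha^2) = \alpha^2 < \infty$, and only at this point does Lemma~\ref{2.26} become available; it then gives $c = 2c - \alpha^2$, i.e.\ $c = \alpha^2$, whence $\mbar(X) \le c$, and the rest of your argument ($c \le \mbar(X)$ and maximality) goes through.
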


\begin{proof}
(1)
Let $\mu_n$ be a maximal sequence in $\Mone(X)$.
Since $\| \mu_n - \mu_m \| \to 0$ as $n, m \to \infty$,
by Lemma~\ref{lemmaC}, it follows by part (2) of Theorem~\nwrefA{2.10.5}
that $\| d_{\mu_n} - d_{\mu_m} \| \to 0$ as $n, m \to \infty$.
Since $C(X)$ is complete, there exists $f \in C(X)$
such that $d_{\mu_n} \to f \in C(X)$ as $n \to \infty$,
so that $d_{\mu_n} (x) \to f(x)$ as $n \to \infty$ for all $x \in X$.

Lemma~\ref{lemmaC} again tells us that $(\mu_n \mid \nu ) \to 0$
as $n \to \infty$, for all $\nu \in \Ezero(X)$.
In particular, $(\mu_n \mid \delta_x - \delta_y) \to 0$
as $n \to \infty$, for all $x, y \in X$.
Thus we have both $d_{\mu_n} (x) - d_{\mu_n} (y) \to 0$
and $d_{\mu_n} (x) \to f(x)$ as $n \to \infty$, for all $x, y \in X$,
and we conclude that $f(x) = f(y)$ for all $x, y \in X$.
Thus $\mu_n$ is a $d$-invariant sequence, and by Lemma~\ref{2.26}
its value is $\mbar(X)$.

\smallskip
(2)
Let $\mu_n$ be a $d$-invariant sequence in $\Mone(X)$ with
value~$c$, and fix $x \in X$. It follows immediately from
the definition of $d$-invariance that $\mu_n - \delta_x$
is a Cauchy sequence in $\Ezero(X)$, and so there exists
$\alpha \geq 0$ such that $\| \mu_n - \delta_x \| \to \alpha$
as $n \to \infty$. This gives
$2 I (\mu_n, \delta_x) - I (\mu_n) \to \alpha ^2$ as $n \to \infty$,
and since $d_{\mu_n}(x) \to c$ as $n \to \infty$,
we have $I (\mu_n) \to 2c - \alpha ^2$. Applying Lemma~\ref{2.25},
we find that $\mbar(X) \leq 2c - (2c - \alpha ^2) = \alpha ^2 < \infty$.
Thus Lemma~\ref{2.26} applies, showing that
$I(\mu_n) \to c$ as $n \to \infty$,
and it follows that $c = \alpha ^2$. Therefore $\mbar(X) \leq c$,
and since $I(\mu_n) \to c$ as $n \to \infty$,
we have $\mbar(X) = c$ and $I(\mu_n) \to \mbar(X)$,
and so $\mu_n$ is a maximal sequence.
\end{proof}

The equivalence of parts (1) and~(2) in the following result
is merely a restatement of the definition of a maximal
sequence, while the equivalence of parts (2) and~(3) is
essentially a restatement of Theorem~\ref{theoremD}.

\begin{corollary}
\label{theoremDnew}
Let $\Xd$ be a compact quasihypermetric space. Then the following
conditions are equivalent.
\begin{enumerate}
\item[(1)]
$\mbar(X) < \infty$.
\item[(2)]
There exists a maximal sequence in $\Mone(X)$.
\item[(3)]
There exists a $d$-invariant sequence in $\Mone(X)$.
\end{enumerate}
\end{corollary}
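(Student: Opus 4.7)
The proof is essentially a bookkeeping exercise, as the statement itself indicates: the equivalence of~(1) and~(2) is immediate from Definition~\ref{definitionA}, while the equivalence of~(2) and~(3) is a direct reformulation of Theorem~\ref{theoremD}. My plan is to verify each of these equivalences in turn, obtaining a short cycle of implications.

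For (1)~$\Leftrightarrow$~(2): if $\mbar(X) < \infty$, then the definition of the supremum supplies a sequence $\mu_n \in \Mone(X)$ with $I(\mu_n) \to \mbar(X)$, which is a maximal sequence in the sense of Definition~\ref{definitionA} --- this is the content of Remark~\ref{remarkB1}. Conversely, the existence of a maximal sequence presupposes $\mbar(X) < \infty$ by Definition~\ref{definitionA}; equivalently, since each $I(\mu_n)$ is finite and $I(\mu_n) \to \mbar(X)$ by maximality, the supremum $\mbar(X)$ is itself finite.

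For (2)~$\Leftrightarrow$~(3): the implication (2)~$\Rightarrow$~(3) follows from the previous step (which furnishes $\mbar(X) < \infty$) together with Theorem~\ref{theoremD} part~(1), which states that any maximal sequence in $\Mone(X)$ is $d$-invariant with value $\mbar(X)$. The reverse implication (3)~$\Rightarrow$~(2) is a direct application of Theorem~\ref{theoremD} part~(2), which tells us that any $d$-invariant sequence with value~$c$ satisfies $\mbar(X) = c < \infty$ and is maximal.

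There is no real obstacle to overcome: all substantive content has already been assembled in Definition~\ref{definitionA}, Remark~\ref{remarkB1}, and Theorem~\ref{theoremD}, and the corollary merely packages these three ingredients into a clean equivalence characterising finiteness of~$\mbar(X)$ in terms of the existence of either sort of distinguished sequence.
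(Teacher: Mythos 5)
Your proof is correct and follows exactly the route the paper intends: (1)$\Leftrightarrow$(2) is immediate from Definition~\ref{definitionA} (together with the trivial existence of approximating sequences noted in Remark~\ref{remarkB1}), and (2)$\Leftrightarrow$(3) is a direct application of the two parts of Theorem~\ref{theoremD}. No gaps.
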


This result takes on an especially pleasant form
in the case of a finite space (see also Theorem~3.4 of~\cite{NW3}).

\begin{theorem}
\label{theoremDcorollary}
Let $\Xd$ be a finite quasihypermetric space. Then the following
conditions are equivalent.
\begin{enumerate}
\item[(1)]
$\mbar(X) < \infty$.
\item[(2)]
There exists a maximal measure in $\Mone(X)$.
\item[(3)]
There exists a $d$-invariant measure in $\Mone(X)$.
\end{enumerate}
\end{theorem}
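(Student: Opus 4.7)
The plan is to establish the cycle $(2) \Rightarrow (1) \Rightarrow (3) \Rightarrow (2)$. The implication $(2) \Rightarrow (1)$ is immediate: if $\mu \in \Mone(X)$ is maximal, then $\mbar(X) = \Imu \in \R$, since the integral of the bounded function $d$ against a finite signed measure is always a real number. The implication $(3) \Rightarrow (2)$ follows directly from Theorem~\ref{2.2} part~(2), which guarantees that in any compact quasihypermetric space, a $d$-invariant measure in $\Mone(X)$ is automatically maximal (with value equal to $\mbar(X)$).

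The substantive content is the implication $(1) \Rightarrow (3)$, and this is where finiteness of $X$ is used. By Corollary~\ref{theoremDnew}, the finiteness of $\mbar(X)$ yields a $d$-invariant sequence $\mu_n$ in $\Mone(X)$; by Theorem~\ref{theoremD} part~(1) together with Lemma~\ref{2.26}, we may take its value to be $\mbar(X)$. Thus $\mu_n(X) = 1$ for all $n$ and $d_{\mu_n} \to \mbar(X) \cdot \underline{1}$ in $C(X)$.

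Now I would exploit finite-dimensionality. Writing $X = \{x_1, \ldots, x_N\}$, identify $\MX$ with $\R^N$ and $C(X)$ with $\R^N$ as well, and consider the linear map $\Phi \colon \MX \to C(X) \times \R$ defined by $\Phi(\mu) = (d_\mu, \mu(X))$. The image of $\Phi$ is a linear subspace of the finite-dimensional space $\R^{N+1}$, hence closed. The hypothesis yields $\Phi(\mu_n) \to (\mbar(X) \cdot \underline{1}, 1)$, so the limit point lies in $\im(\Phi)$. Any $\mu$ in its preimage then satisfies $d_\mu = \mbar(X) \cdot \underline{1}$ and $\mu(X) = 1$, hence is a $d$-invariant measure in $\Mone(X)$, establishing $(3)$.

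The one potential obstacle is that the sequence $\mu_n$ itself need not converge, nor even be bounded in total variation: the seminorm $\|\cdot\|$ on $\Ezero(X)$ controls convergence only modulo its null space, and we have no direct control on $\mu_n$ in $\MX$. The closed-image device sidesteps this entirely, since it converts the problem into one about convergence in the finite-dimensional target space of $\Phi$, where closedness of linear subspaces is automatic; the particular preimage measure exists without requiring the approximating sequence itself to converge.
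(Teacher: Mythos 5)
Your proof is correct, and for the one substantive implication it takes a genuinely different route from the paper. The paper also reduces everything to the nontrivial direction via Theorem~\ref{2.2} and the sequence results, but it closes the cycle by proving $(1)\Rightarrow(2)$: starting from the maximal (hence $d$-invariant) sequence $\mu_n$ supplied by Theorem~\ref{theoremD}, it observes that $\mu_n$ is Cauchy in the semi-inner product space $E(X)$, invokes the completeness of $E(X)$ for finite~$X$ (Theorem~2.31 of the earlier paper) to obtain a limit $\mu$, uses the closedness of $\Mzero(X)$ and hence of $\Mone(X)$ in $E(X)$ to place $\mu$ in $\Mone(X)$, and then applies Remark~\ref{remarkB3} to conclude that $\mu$ is maximal. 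Your argument instead proves $(1)\Rightarrow(3)$ directly: from the same $d$-invariant sequence you use only the data $(d_{\mu_n},\mu_n(X))\to(\mbar(X)\cdot\underline{1},1)$ and the fact that the image of the linear map $\Phi(\mu)=(d_\mu,\mu(X))$ between finite-dimensional spaces is closed. This sidesteps the completeness machinery of $E(X)$ entirely and, as you note, is robust to the sequence $\mu_n$ itself failing to converge or even to be bounded in total variation; what it costs is that it is specific to the linear-algebraic setting and produces the invariant measure rather than the maximal one directly (which is harmless, given Theorem~\ref{2.2}). A small economy worth noting: you do not actually need the value of the $d$-invariant sequence to be $\mbar(X)$ for your argument --- any $d$-invariant sequence from Corollary~\ref{theoremDnew}, with whatever value $c$, yields via the same closed-image device a $d$-invariant measure in $\Mone(X)$, so the appeal to Theorem~\ref{theoremD} part~(1) and Lemma~\ref{2.26} could be dropped.
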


\begin{proof}
The equivalence of (2) and~(3) is given by Theorem~\ref{2.2},
and the fact that (2) implies~(1) is trivial, so we need only
confirm that (1) implies~(2).
If $\mbar(X) < \infty$, then Theorem~\ref{theoremD}
tells us that there exists a sequence $\mu_n$ in $\Mone(X)$
which is maximal and is $d$-invariant with value $\mbar(X)$.
By the definition of $d$-invariance, the sequence~$\mu_n$
is a Cauchy sequence in the semi-inner product space $E(X)$,
which, since $X$ is finite, is complete, by Theorem~\nwrefA{2.31}.
Choose $\mu \in \MX$ such that \mbox{$\mu_n \to \mu$} as $n \to \infty$.
By Corollary~\nwrefA{2.10.6}, the subspace $\Mzero(X) = \Ezero(X)$
is closed in $E(X)$, and therefore so is its translate $\Mone(X)$,
and it follows that $\mu \in \Mone(X)$. Now, by Remark~\ref{remarkB3},
we conclude that $\mu$ is a maximal measure, and so (1) implies~(2),
as required.
\end{proof}

\section{The Finiteness of $\mbar(X)$}
\label{closerdiscussion}

We now turn to discussion of $\mbar(X)$, focusing especially on
the circumstances under which $\mbar(X)$ is finite. We begin by
recalling two of our earlier results from~\cite{NW1}
which give information on this question.

\begin{theorem}[= Theorem~\nwrefA{2.6}]
\label{2.6new}
If $\Xd$ is a compact non-quasihypermetric space, then $\mbar(X) = \infty$.
\end{theorem}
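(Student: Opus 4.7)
The plan is to exploit the equivalence noted in the introduction and established in Theorem~\nwrefA{qhmconds}: a compact metric space $\Xd$ is quasihypermetric if and only if $\Imu \leq 0$ for every $\mu \in \MzeroX$. Since $X$ is assumed not quasihypermetric, this equivalence immediately supplies a measure $\varphi \in \MzeroX$ with $I(\varphi) > 0$, which I fix for the remainder of the argument. (If one prefers to avoid quoting Theorem~\nwrefA{qhmconds}, the same $\varphi$ may be produced by hand: the failure of the combinatorial inequality yields $\alpha_1, \ldots, \alpha_n \in \R$ with $\sum \alpha_i = 0$ and $x_1, \ldots, x_n \in X$ such that $\varphi := \sum_i \alpha_i \delta_{x_i}$ lies in $\MzeroX$ and satisfies $I(\varphi) = \sum_{i,j} \alpha_i \alpha_j d(x_i, x_j) > 0$.)

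Next I would select any convenient base measure $\nu \in \MoneX$; for instance $\nu = \delta_{x_0}$ for some $x_0 \in X$. For each real parameter $t$, set
\[
\mu_t := \nu + t \varphi.
\]
Because $\varphi(X) = 0$ and $\nu(X) = 1$, we have $\mu_t(X) = 1$ for every $t \in \R$, so every $\mu_t$ is an admissible candidate in the supremum defining $\mbar(X)$.

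Finally, by the bilinearity of $I(\cdot, \cdot)$,
\[
I(\mu_t) = I(\nu) + 2t\, I(\nu, \varphi) + t^2 I(\varphi),
\]
which is a quadratic in~$t$ whose leading coefficient is strictly positive. Letting $t \to \infty$ forces $I(\mu_t) \to \infty$, and therefore
\[
\mbar(X) \geq \sup_{t \in \R} I(\mu_t) = \infty,
\]
as required. There is no genuine obstacle in this argument: once the combinatorial failure of the quasihypermetric property has been translated into the existence of a single signed measure $\varphi \in \MzeroX$ with $I(\varphi) > 0$, the family $\nu + t\varphi$ manufactures an unbounded family inside $\MoneX$ essentially for free. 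The only non-trivial ingredient is the translation itself, which is standard and supplied by Theorem~\nwrefA{qhmconds}.
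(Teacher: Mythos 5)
Your argument is correct and complete: the failure of the quasihypermetric inequality gives a discrete $\varphi \in \Mzero(X)$ with $I(\varphi) > 0$, and the line $\mu_t = \nu + t\varphi$ stays in $\Mone(X)$ while $I(\mu_t)$ is a quadratic in $t$ with positive leading coefficient, so $\mbar(X) = \infty$. The present paper only quotes this result from Part~I without reproving it, but your perturbation argument is the standard proof of that cited theorem, so there is nothing further to add.
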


\begin{theorem}[= Theorem~\nwrefA{2.9.5}]
\label{2.9.5new}
Let $\Xd$ be a compact quasihypermetric space.
If there exists $\mu \in \Mzero(X)$ which is $d$-invariant
with value $c \neq 0$, then
\begin{enumerate}
\item[(1)]
$X$ is not strictly quasihypermetric and
\item[(2)]
$\mbar(X) = \infty$.
\end{enumerate}
\end{theorem}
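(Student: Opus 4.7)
The plan is to prove both parts by direct computation with the invariant measure, using the fact that $d$-invariance with value $c$ means $d_\mu(x) = c$ for all $x \in X$, and hence $I(\mu, \nu) = \int d_\mu \, d\nu = c \cdot \nu(X)$ for any $\nu \in \MX$.

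For part~(1), I would first compute $I(\mu) = \mu(d_\mu) = c \cdot \mu(X) = c \cdot 0 = 0$. The measure~$\mu$ cannot be the zero measure, because if $\mu = 0$ then $d_\mu \equiv 0$, contradicting $c \neq 0$. Thus $\mu$ is a nonzero element of $\MzeroX$ with $I(\mu) = 0$, which, by Definition~\nwrefA{sqhmdefn}, shows that $X$ is not strictly quasihypermetric.

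For part~(2), I would fix any $\nu_0 \in \Mone(X)$ (for instance, $\delta_x$ for some $x \in X$) and consider the one-parameter family $\nu_t := \nu_0 + t\mu$ for $t \in \R$. Since $\mu \in \MzeroX$, we have $\nu_t \in \Mone(X)$. Expanding bilinearly,
\[
I(\nu_t) = I(\nu_0) + 2t\, I(\nu_0, \mu) + t^2 I(\mu) = I(\nu_0) + 2tc \cdot \nu_0(X) + 0 = I(\nu_0) + 2tc,
\]
where I have used $I(\nu_0, \mu) = \int d_\mu \, d\nu_0 = c$ and $I(\mu) = 0$ from part~(1). Since $c \neq 0$, letting $t \to +\infty$ (if $c > 0$) or $t \to -\infty$ (if $c < 0$) makes $I(\nu_t)$ unbounded above. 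Hence $\mbar(X) = \infty$.

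There is no real obstacle here; the argument is a short algebraic manipulation exploiting two features of the invariant measure: it automatically satisfies $I(\mu) = 0$ (because it has mass zero), and it interacts with any other measure through the constant~$c$ alone. The only point requiring minimal care is verifying $\mu \neq 0$ in part~(1), which is where the hypothesis $c \neq 0$ is used; this is exactly what allows us to push $I(\nu_t)$ linearly to infinity along the affine ray in part~(2).
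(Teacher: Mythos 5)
Your proof is correct. Note that the present paper does not reprove this result but imports it from~\cite{NW1}; your argument --- observing that $I(\mu) = \mu(d_\mu) = c\,\mu(X) = 0$ while $\mu \neq 0$ for part~(1), and then letting $I(\nu_0 + t\mu) = I(\nu_0) + 2tc$ run off to infinity along the affine line in $\Mone(X)$ for part~(2) --- is the natural and essentially canonical proof of this statement, with all steps verified (in particular the identity $I(\nu_0,\mu) = \nu_0(d_\mu) = c$ and the nonvanishing of $\mu$, which is exactly where $c \neq 0$ enters).
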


If $X$ is a finite space, we can give more information.

\begin{theorem}
\label{2.13}
Let $\Xd$ be a finite quasihypermetric space. Then we have the following.
\begin{enumerate}
\item[(1)]
If $X$ is strictly quasihypermetric, then $\mbar(X) < \infty$.
\item[(2)]
If $X$ is not strictly quasihypermetric,
then $\mbar(X) < \infty$ if and only if there exists no
$d$-invariant measure $\mu \in \Mzero(X)$ with value $c \neq 0$.
\end{enumerate}
\end{theorem}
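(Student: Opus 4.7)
The plan is to handle each part via the linear algebra of the semi-inner product $(\mu \mid \nu) = -\Imunu$ on the finite-dimensional space $\MzeroX$.

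For part~(1), strict quasihypermetricity together with finiteness makes $\Ezero(X) = \MzeroX$ a genuine finite-dimensional Hilbert space. I would fix $\mu_0 := \delta_{x_1} \in \MoneX$ and write each $\mu \in \MoneX$ uniquely as $\mu = \mu_0 + \nu$ with $\nu \in \MzeroX$, giving
\[
\Imu = I(\mu_0) + 2 I(\mu_0, \nu) - \|\nu\|^2.
\]
The linear functional $\nu \mapsto I(\mu_0, \nu) = \nu(d_{\mu_0})$ is automatically bounded on the finite-dimensional space $\MzeroX$, so by Riesz representation there is $\xi \in \MzeroX$ with $I(\mu_0, \nu) = -(\xi \mid \nu)$. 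Completing the square then yields $\Imu \leq I(\mu_0) + \|\xi\|^2$, so $\mbar(X) < \infty$.

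For part~(2), the forward direction is the contrapositive of Theorem~\ref{2.9.5new}, so I need only treat the converse. Assuming no $d$-invariant $\mu \in \MzeroX$ has value $c \neq 0$, I would first identify the null space $N := \{\mu \in \MzeroX : \|\mu\| = 0\}$. By Cauchy-Schwarz, $\mu \in N$ iff $I(\mu, \nu) = 0$ for every $\nu \in \MzeroX$, iff $\nu(d_\mu) = 0$ for every such $\nu$. Since the annihilator of $\MzeroX$ in $C(X)$ consists of constant functions, this is in turn equivalent to $\mu$ being $d$-invariant, and the hypothesis then forces $d_\mu \equiv 0$ for each $\mu \in N$.

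Now I would pick any algebraic complement $V$ of $N$ in $\MzeroX$; on $V$, the form $-I$ is positive definite. Decomposing any $\mu \in \MoneX$ as $\mu = \delta_{x_1} + \nu_1 + \nu_2$ with $\nu_1 \in N$ and $\nu_2 \in V$, the identities $I(\delta_{x_1}, \nu_1) = d_{\nu_1}(x_1) = 0$, $I(\nu_1, \nu_2) = -(\nu_1 \mid \nu_2) = 0$, and $I(\nu_1) = 0$ together imply
\[
\Imu = I(\delta_{x_1}) + 2 I(\delta_{x_1}, \nu_2) + I(\nu_2),
\]
so that $\Imu$ depends only on $\nu_2 \in V$. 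The part~(1) argument applied on $V$ then bounds this expression above, giving $\mbar(X) < \infty$. The main obstacle is the identification of $N$ with the $d$-invariant mass-zero measures, since this is what permits the reduction of the non-strict case to the strict case; the remaining manipulations are routine finite-dimensional linear algebra.
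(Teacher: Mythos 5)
Your proof is correct and follows essentially the same route as the paper: both arguments reduce the finiteness of $\mbar(X)$ to the boundedness of the linear functional $\nu \mapsto I(\delta_{x_1}, \nu)$ on the finite-dimensional space $\Ezero(X)$ modulo its null space, with the hypothesis in part~(2) serving precisely to show that this functional vanishes on the null space (your $N$ is the paper's $F$, and your choice of a complement $V$ is equivalent to the paper's passage to the quotient $\Ezero(X)/F$). The only difference is that you carry out explicitly, via Riesz representation and completing the square, the step that the paper delegates to Theorem~2.10 of~\cite{NW1}, which makes your version self-contained but not substantively different.
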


\begin{proof}
(1)
Since $X$ is finite, it follows that $\Ezero(X)$
is a finite-dimensional normed space, and hence $J(\mu)$
(see section~\ref{chapter2a})
is a bounded linear functional on $\Ezero(X)$ for each $\mu \in \MX$.
Therefore, part~(3) of Theorem~\nwrefA{2.10}
(see also Remark~\nwrefA{2.11}) implies the assertion.

\smallskip
(2)
Theorem~\ref{2.9.5new} part~(2) deals immediately
with the forward implication. For the reverse implication,
assume that no measure $\mu \in \Mzero(X)$
and constant $c \neq 0$ exist with the property that
$d_\mu(x) = c$ for all $x \in X$.
Fix $x \in X$, and define $f \colon \Ezero(X)/F \to \R$ by setting
$f(\nu + F) = I(\delta_x, \nu)$ for $\nu + F \in \Ezero(X)/F$.
(Recall from Lemma~\nwrefA{2.7} that $F$ denotes the subspace
$\{ \mu \in \Ezero(X) : \| \mu \| = 0 \}$ of~$\Ezero(X)$.)

If $\nu + F = \nu' + F$, we have $\nu-\nu' \in F$, and hence,
by part~(5) of Lemma~\nwrefA{2.7}, there exists $\gamma \in \R$
such that $d_{\nu-\nu'}(x) = \gamma$ for all $x \in X$.
By our assumption, we have $\gamma = 0$, and hence
$I(\delta_x, \nu) = d_\nu(x) = d_{\nu'}(x) = I(\delta_x, \nu')$.
Thus $f$ is a well defined linear functional on the finite-dimensional
normed space $\Ezero(X)/F$ (see part~(4) of Lemma~\nwrefA{2.7}).
Therefore, $f$ is bounded on $\Ezero(X)/F$,
and so there exists $M \geq 0$ such that
\[
\bigl| I(\delta_x, \nu) \bigr|
 = \bigl| f(\nu + F) \bigr|
 \leq M \, \| \nu + F \|
 = M \, \| \nu \|
\]
for all $\nu \in \Ezero(X)$. Now part~(2) of Theorem~\nwrefA{2.10}
implies that $\mbar(X) < \infty$, as required.
\end{proof}

\begin{theorem}
\label{2.9}
There exists a $5$-point quasihypermetric, non-strictly quasihypermetric space~$Z$ with
$\mbar(Z) = \infty$.
\end{theorem}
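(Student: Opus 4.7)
The plan is to reuse the two building blocks from the proof of Theorem~\ref{2.9A}, the $2$-point space $X=\{x_1,x_2\}$ with $d_1(x_1,x_2)=2$ and the equilateral $3$-point space $Y=\{y_1,y_2,y_3\}$ with $d_2(y_i,y_j)=2$ for $i\neq j$, but now fuse them together using the construction of Theorem~\ref{2.1} with a gluing constant~$c$ chosen so that $Z=X\cup Y$ lies precisely on the boundary between quasihypermetric and strictly quasihypermetric. Both $X$ and $Y$ are strictly quasihypermetric (a direct check using $\sum\alpha_i=0$), and $\mu_1=\frac12(\delta_{x_1}+\delta_{x_2})$ and $\mu_2=\frac13(\delta_{y_1}+\delta_{y_2}+\delta_{y_3})$ are invariant (hence maximal by Theorem~\ref{2.2}(2)) with $\mbar(X)=1$ and $\mbar(Y)=\tfrac43$.

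The key move is to take $c=\tfrac{7}{6}$, so that $2c=\tfrac73=\mbar(X)+\mbar(Y)$, while also $2c\geq 2=\max(D(X),D(Y))$. By Theorem~\ref{2.1}(2), $Z$ is then quasihypermetric; by Theorem~\ref{2.1}(3), since equality $2c=\mbar(X)+\mbar(Y)$ holds and each of $X,Y$ admits a maximal measure (so none of conditions (a),(b),(c) there is satisfied), $Z$ fails to be strictly quasihypermetric. This already accounts for the first two requirements of the theorem, and produces a $5$-point space.

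To force $\mbar(Z)=\infty$, I invoke Theorem~\ref{2.1.4} with these $\mu_1,\mu_2$. It produces the $d$-invariant measure $\mu=(\mbar(Y)-c)\mu_1+(\mbar(X)-c)\mu_2=\tfrac{1}{6}(\mu_1-\mu_2)$ on $Z$ with value $\mbar(X)\mbar(Y)-c^2=\tfrac43-\tfrac{49}{36}=-\tfrac{1}{36}$, and moreover $\mu\in\Mzero(Z)$ precisely because $Z$ is quasihypermetric but not strictly quasihypermetric. Since this value is nonzero, Theorem~\ref{2.9.5new}(2) immediately yields $\mbar(Z)=\infty$.

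The arithmetic is routine once the design is in place; the main conceptual obstacle is recognizing that Theorems~\ref{2.1}, \ref{2.1.4}, and~\ref{2.9.5new} have been engineered to mesh exactly at the critical gluing value $2c=\mbar(X)+\mbar(Y)$, so that the "boundary" case simultaneously guarantees quasihypermetricity, loss of strictness, and the existence of a nonzero-valued invariant measure of mass zero. The only real check to perform carefully is that the strict inequality $\mbar(X)\neq\mbar(Y)$ is what makes the Theorem~\ref{2.1.4} value $\mbar(X)\mbar(Y)-c^2=-\bigl(\tfrac{\mbar(X)-\mbar(Y)}{2}\bigr)^2$ nonzero; the choice of a $2$-point space against a $3$-point equilateral space is the simplest way to ensure this while keeping the total cardinality at~$5$.
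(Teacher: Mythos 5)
Your proposal is correct and follows essentially the same route as the paper: glue the $2$-point and $3$-point equilateral spaces via Theorem~\ref{2.1} at the critical value $2c=\mbar(X)+\mbar(Y)$, use Theorem~\ref{2.1.4} to extract a mass-zero $d$-invariant measure with nonzero value $-\bigl(\tfrac{\mbar(X)-\mbar(Y)}{2}\bigr)^2$, and conclude via Theorem~\ref{2.9.5new}. The only difference is the choice of edge lengths (the paper uses $1$ and $\tfrac45$, you use $2$ and $2$), and your check that $2c=\tfrac73\geq\max(D(X),D(Y))=2$ is the one point where this matters; it holds, so the argument goes through.
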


\begin{proof}
For $X = \{ x_1, x_2 \}$ and $Y = \{y_1, y_2, y_3\}$, define
\[
d_1(x_i, x_j)
 =
\begin{cases}
  0 & i = j, \\
  1 & i \neq j,
\end{cases}
\qquad
d_2 (y_i, y_j)
 =
\begin{cases}
  0 & i = j, \\
  \frac{4}{5} & i \neq j.
\end{cases}
\]
It is easy to check that $(X, d_1)$ and $(Y, d_2)$ are
compact strictly quasihypermetric spaces. It is clear that
$\mu_1 = \frac{1}{2}(\delta_{x_1} + \delta_{x_2}) \in \Mone(X)$
and
$\mu_2 = \frac{1}{3}(\delta_{y_1} + \delta_{y_2} + \delta_{y_3}) \in \Mone(Y)$
are invariant measures, and Theorem~\ref{2.2} part~(2) then
implies that they are maximal measures, so that we have
$\mbar(X) = \frac{1}{2}$ and $\mbar(Y) = \frac{8}{15}$.

If we let $Z = X \cup Y$ and define $d \colon Z \times Z \to \R$
as in Theorem~\ref{2.1}, with 
$c = \frac{1}{2} \bigl( \mbar(X) + \mbar(Y) \bigr) = \frac{31}{60}$,
then Theorem~\ref{2.1} shows that $(Z, d)$ is a quasihypermetric space.
Further, Theorem~\ref{2.1.4} implies that
$\mu = \mu_1 - \mu_2 \in \Mzero(X)$ is $d$-invariant
with value $-\frac{1}{60} \neq 0$,
and then finally Theorem~\ref{2.9.5new} implies that $Z$ is
not strictly quasihypermetric and that $\mbar(Z) = \infty$.
\end{proof}

\begin{remark}
\label{5not4}
Theorem~\ref{2.9} constructs a space with $5$ points.
We note that $5$ is the smallest number possible in such an example:
in Theorem~5.6 of~\cite{NW3}, we show among other things
that every metric space with $4$ or fewer points must have
$\mbar(X) < \infty$.
\end{remark}

To complete our survey of the finiteness or otherwise of~$\mbar$,
we require the following result and example.

\begin{theorem}
\label{sqhmmbarinf}
There exists a compact strictly quasihypermetric space~$Z$ with
\mbox{$\mbar(Z) = \infty$}.
\end{theorem}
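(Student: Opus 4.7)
The plan is to adapt the gluing construction of Theorem~\ref{2.9} to the strictly quasihypermetric setting. In Theorem~\ref{2.9} strict quasihypermetricity failed precisely because both summands possessed maximal measures, forcing us via Theorem~\ref{2.1}(3) to choose $2c > \mbar(X) + \mbar(Y)$ if we wanted strictness, and then no invariant measure in $\Mzero(Z)$ is produced by Theorem~\ref{2.1.4} and $\mbar(Z)$ stays finite. My idea is to replace one finite summand by an infinite compact strictly quasihypermetric space having \emph{no} maximal measure, so that condition~(b) of Theorem~\ref{2.1}(3) lets us use the critical choice $2c = \mbar(X) + \mbar(Y)$ while still preserving strictness of~$Z$; the same critical value then produces an approximating sequence in~$\Mzero(Z)$ that forces $\mbar(Z) = \infty$.

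Concretely, I would take $X = B^3$, which by Example~\ref{2.5} is compact and strictly quasihypermetric with $\mbar(X) = 2$ and no maximal measure, and $Y = \{y_1, y_2\}$ with $d_2(y_1, y_2) = 1$, which is strictly quasihypermetric with $\mbar(Y) = 1/2$ and unique maximal measure $\mu_Y = (\delta_{y_1} + \delta_{y_2})/2$. Form $Z = X \cup Y$ using Theorem~\ref{2.1} with $c = 5/4$: then $2c = 5/2 \geq \max(D(X), D(Y)) = 2$ and $2c = \mbar(X) + \mbar(Y)$, so parts~(1) and~(3)(b) of Theorem~\ref{2.1} immediately give that $Z$ is a compact strictly quasihypermetric space.

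For the infinitude of $\mbar(Z)$, the idea is to substitute, for the missing maximal measure~$\mu_1$ of Theorem~\ref{2.1.4}, a maximal sequence $\mu_n \in \Mone(X)$, which exists by Remark~\ref{remarkB1} since $\mbar(X) < \infty$. By Theorem~\ref{theoremD}(1) applied to~$X$, $\mu_n$ is Cauchy in $\Ezero(X)$ and $d_{\mu_n} \to \mbar(X)\cdot\underline{1}$ uniformly on~$X$. Setting $\nu_n = \mu_n - \mu_Y \in \Mzero(Z)$, a short calculation using that $d \equiv c$ across the two summands shows that $d_{\nu_n}(z) \to (\mbar(X) - \mbar(Y))/2 = 3/4$ uniformly in $z \in Z$, while $I(\nu_n) = I(\mu_n) + \mbar(Y) - 2c \to 0$. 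Strict quasihypermetricity of~$Z$ together with $\nu_n \ne 0$ then force $\epsilon_n := -I(\nu_n) > 0$ with $\epsilon_n \to 0^+$.

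To finish, I would fix any $z_0 \in Z$ and consider the one-parameter family $\lambda_{n,t} = \delta_{z_0} + t\nu_n \in \Mone(Z)$, expanding as $I(\lambda_{n,t}) = 2 t\, d_{\nu_n}(z_0) - t^2 \epsilon_n$, which is maximized at $t = d_{\nu_n}(z_0)/\epsilon_n$ with maximum value $d_{\nu_n}(z_0)^2/\epsilon_n$. Since the numerator has nonzero limit $9/16$ while $\epsilon_n \to 0^+$, this maximum tends to $+\infty$ and so $\mbar(Z) = \infty$. The main subtle point, and the reason the proof is not just a copy of Theorem~\ref{2.9}, is that in a strictly quasihypermetric space no single measure in~$\Mzero(Z)$ can be $d$-invariant with nonzero value (by Theorem~\ref{2.9.5new}); the blow-up must therefore be produced by a \emph{sequence} whose $d$-potentials approach a nonzero constant while its norm tends to~$0$ — an incompleteness phenomenon in~$\Ezero(Z)$ that cannot arise for finite spaces and which is exactly what the construction isolates.
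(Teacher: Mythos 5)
Your proposal is correct and follows essentially the same route as the paper: glue a compact strictly quasihypermetric space without a maximal measure (the paper also uses $B^3$ via Example~\ref{2.5}) to a two-point space at the critical distance $2c = \mbar(X) + \mbar(Y)$, invoke Theorem~\ref{2.1}(3)(b) for strictness, and then use a maximal sequence on~$X$ to produce measures in $\Mone(Z)$ with unbounded energy. The only difference is cosmetic: the paper writes the blow-up family explicitly as $\alpha_n \mu_n + \tfrac12(1-\alpha_n)(\delta_{y_1}+\delta_{y_2})$ with $\alpha_n = (\mbar(X)-I(\mu_n))^{-1/2}$, whereas you recentre at $\delta_{z_0}$ and optimize the quadratic in~$t$ along the direction $\nu_n = \mu_n - \mu_Y$, which is a clean reparametrization of the same idea.
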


\begin{proof}
Choose a strictly quasihypermetric compact space $(X, d_1)$
without a maximal measure and with $\mbar(X) < \infty$
(see Example~\ref{2.5}). Also, let $Y = \{y_1, y_2\}$, and define
\[
d_2(y_i, y_j)
 =
\begin{cases}
  0, & i = j, \\
  D(X), & i \neq j,
\end{cases}
\qquad
 1 \leq i, j \leq 2.
\]
Of course, $(Y, d_2)$ is a compact strictly quasihypermetric space
with $\mbar(Y) = D(X)/2$.

Let $Z = X \cup Y$, and define $d \colon Z \times Z \to \R$
as in Theorem~\ref{2.1}, with $c = \frac{1}{2}(\mbar(X) + \mbar(Y))$.
Choose two points $x_1, x_2 \in X$ with $D(X) = d(x_1, x_2)$,
and note that
$I \bigl( \frac{1}{2}(\delta_{x_1} + \delta_{x_2}) \bigr) = D(X)/2$.

Since $\mbar(X)$ is not attained, we have
$\mbar(X)
 > I \bigl( \frac{1}{2}(\delta_{x_1} + \delta_{x_2}) \bigr)
 = D(X)/2$.
Hence $2c \geq \max(\mbar(X) + \mbar(Y), D(X), D(Y))$,
and Theorem~\ref{2.1} implies that $(Z, d)$ is a compact strictly quasihypermetric space.

Now choose $\mu_n \in \Mone(X)$ for each~$n$ such that
$I(\mu_n) \to \mbar(X) < \infty$ as $n \to \infty$
(we assume that $\mbar(X) < \infty$,
since there is otherwise nothing to prove).
Define $\nu_n \in \Mone(Z)$ by setting
\[
\nu_n = \alpha_n \mu_n
 + {\ts\frac{1}{2}} (1 - \alpha_n) (\delta_{y_1} + \delta_{y_2}),
\]
where $\alpha_n = (\mbar(X)-I(\mu_n))^{- \frac{1}{2}}$.
By assumption, $\alpha_n$ is well defined, $\alpha_n > 0$,
and $\alpha_n \to \infty$ as $n \to \infty$.
Now
\[
I(\nu_n) = \alpha^{2}_{n} I(\mu_n) + 2c \alpha_n
(1 - \alpha_n) + {\ts\frac{1}{2}} (1 - \alpha_n)^2 D(X),
\]
and, expanding and simplifying, we find finally that
\[
I(\nu_n)
 = -1 + {\ts\frac{1}{2}} D(X)
  + \alpha_n \left(\mbar(X) - {\ts\frac{1}{2}} D(X) \right)
 \to \infty
\]
as $n \to \infty$, since $\mbar(X) - {\ts\frac{1}{2}} D(X) > 0$,
giving the result.
\end{proof}

\begin{example}
\label{4ptexample}
Let $X$ be a $4$-point space consisting of any two pairs
of diametrically opposite points chosen from the circle of radius~$1$
with the arc-length metric.
By Example~\nwrefA{nonstrict}, $X$ is quasihypermetric
but not strictly quasihypermetric, and by Corollary~\ref{2.4}
(see also Theorem~5.6 of~\cite{NW3}), we have $\mbar(X) < \infty$.
\end{example}

We can sum up our findings so far
on the finiteness of $\mbar(X)$ as follows.

\begin{theorem}
\label{2.17}
Let $\Xd$ be a compact metric space.
\begin{enumerate}
\item[(1)]
If $X$ is not quasihypermetric, then $\mbar(X) = \infty$.
\item[(2)]
If $X$ is quasihypermetric but not strictly quasihypermetric,
then $\mbar(X) < \infty$ and $\mbar(X) = \infty$ are both possible.
\item[(3)]
If $X$ is strictly quasihypermetric, then $\mbar(X) < \infty$ and
$\mbar(X) = \infty$ are both possible.
\end{enumerate}
\end{theorem}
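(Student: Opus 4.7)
The plan is straightforward because Theorem~\ref{2.17} is essentially a summary statement: each of its three clauses has already been established or witnessed earlier in the paper, so the proof amounts to identifying the appropriate reference for each clause.

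Part~(1) is nothing more than a restatement of Theorem~\ref{2.6new} (imported via \cite{NW1}), which directly asserts that $\mbar(X) = \infty$ whenever the compact metric space $X$ fails to be quasihypermetric. I would simply cite this.

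For part~(2), I would exhibit separate witnesses for the two possibilities. For the case $\mbar(X) < \infty$, Example~\ref{4ptexample} provides a 4-point subset of the unit circle consisting of two pairs of diametrically opposite points; by Example~\nwrefA{nonstrict} this space is quasihypermetric but not strictly quasihypermetric, and by Corollary~\ref{2.4} (or directly by the 4-point result in~\cite{NW3}) it satisfies $\mbar(X) < \infty$. For the case $\mbar(X) = \infty$, Theorem~\ref{2.9} exhibits a 5-point quasihypermetric, non-strictly quasihypermetric space~$Z$ with $\mbar(Z) = \infty$.

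For part~(3), I would again exhibit separate witnesses. For the case $\mbar(X) < \infty$, Corollary~\ref{2.3} shows that the strictly quasihypermetric interval $[a,b]$ with the usual metric satisfies $\mbar([a,b]) = (b-a)/2 < \infty$ (alternatively, Example~\ref{2.5} gives the strictly quasihypermetric ball~$B^3$ with $\mbar(B^3) = 2$). For the case $\mbar(X) = \infty$, Theorem~\ref{sqhmmbarinf} constructs a compact strictly quasihypermetric space~$Z$ with $\mbar(Z) = \infty$. There is no real obstacle to overcome here: all of the required content has been established above, and the only work is to assemble the citations cleanly and in order.
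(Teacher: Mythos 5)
Your proposal is correct and matches the paper's own proof essentially exactly: the paper likewise cites Theorem~\ref{2.6new} for part~(1), Example~\ref{4ptexample} and Theorem~\ref{2.9} for part~(2), and Corollary~\ref{2.3} and Theorem~\ref{sqhmmbarinf} for part~(3). No gaps.
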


\begin{proof}
Assertion~(1) follows from Theorem~\nwrefA{2.6};
assertion~(2) follows from Example~\ref{4ptexample}
and Theorem~\ref{2.9}; and
assertion~(3) follows from Corollary~\ref{2.3}
and Theorem~\ref{sqhmmbarinf}.
\end{proof}

We conclude with some remarks on the quasihypermetric property
and the strict quasihypermetric property.

For a compact metric space $\Xd$, the quasihypermetric property
is defined as a condition on the finite subsets of~$X$,
although by Theorem~\nwrefA{qhmconds} the property can also
be characterised measure-theoretically.
In particular, $X$ is quasihypermetric if and only if
every finite subset of~$X$ is quasihypermetric.
The next result implies that the strict quasihypermetric property
cannot be expressed as a condition on finite subsets.

\begin{theorem}
\label{sqhmexample}
There exists an infinite compact metric space
all of whose proper compact subsets
\textup{(}and its finite subsets in particular\textup{)}
are strictly quasihypermetric but which is not itself
strictly quasihypermetric.
\end{theorem}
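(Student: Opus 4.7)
The plan is to apply Theorem~\ref{2.1} with the ``tight'' constant $c=\tfrac12(\bar M(X)+\bar M(Y))$, choosing $X$ and $Y$ to be infinite compact strictly quasihypermetric spaces whose unique maximal measures have \emph{full support}. The full-support hypothesis is the critical ingredient: by Theorem~\ref{2.2}(3) every proper compact subset of such an $X$ that still achieves $\bar M(X)$ must fail to carry a maximal measure, and this is exactly what is needed to apply clauses (b) or (c) of Theorem~\ref{2.1}(3) to arbitrary proper subsets of the union.

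Concretely I take $X$ and $Y$ to be two disjoint copies of the unit sphere $S^{2}\subset\R^{3}$ with the usual euclidean metric. Each is strictly quasihypermetric as a subspace of the strictly quasihypermetric ball $B^{3}$ (Example~\ref{2.5}). By rotation invariance, the uniform probability measure $\mu_X^*$ satisfies $d_{\mu_X^*}\equiv\text{const}$, so by Theorem~\ref{2.2}(2) it is the (unique) maximal measure on $X$, and $\bar M(X)$ equals its common value; crucially $\supp\mu_X^*=X$. Analogously for $Y$. A direct check shows $2c=\bar M(X)+\bar M(Y)>2=D(X)=D(Y)$, so by Theorem~\ref{2.1}(1)--(2) the union $Z=X\cup Y$ is a compact, infinite, quasihypermetric metric space; since equality holds in the bridging condition and both $X,Y$ possess maximal measures, Theorem~\ref{2.1}(3) tells us $Z$ is \emph{not} strictly quasihypermetric.

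Finally, given a proper compact subset $C\subsetneq Z$, decompose $C=C_X\cup C_Y$ with $C_X\subseteq X$ and $C_Y\subseteq Y$ compact. If one of $C_X,C_Y$ is empty then $C$ lies in $X$ or $Y$ alone and is strictly quasihypermetric by heredity. Otherwise both are nonempty and, without loss of generality, $C_X\subsetneq X$. Monotonicity of $\bar M$ gives $\bar M(C_X)+\bar M(C_Y)\le 2c$. If this inequality is strict, clause (a) of Theorem~\ref{2.1}(3) applies and $C$ is strictly quasihypermetric. If it is an equality, then $\bar M(C_X)=\bar M(X)$; any hypothetical maximal measure $\nu$ on $C_X$ would lie in $\M_1(X)$ with $I(\nu)=\bar M(X)$, hence be maximal on $X$, hence equal $\mu_X^*$ by Theorem~\ref{2.2}(3); but $\supp\mu_X^*=X\not\subseteq C_X$, a contradiction. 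Thus $C_X$ has no maximal measure, and clause (b) of Theorem~\ref{2.1}(3) again gives strictness of $C$.

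The principal obstacle is securing the ingredient spaces: one must simultaneously arrange strict quasihypermetricity, finiteness of $\bar M$, and a maximal measure of full support. Spheres with the euclidean metric meet every demand at once---they are strictly quasihypermetric as subspaces of the strictly quasihypermetric euclidean ball, they are compact and infinite, and their rotation group acts transitively, so the uniform (Haar) probability measure is automatically $d$-invariant and hence (by Theorem~\ref{2.2}(2)) maximal with full support.
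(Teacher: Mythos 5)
Your proof is correct and follows essentially the same strategy as the paper's: the paper glues two euclidean circles $S^1$ (rather than spheres $S^2$) at the tight constant $2c=\mbar(X)+\mbar(Y)$, uses Theorem~\ref{2.1}(3) with the existing uniform maximal measures to defeat strictness of the union, and handles proper subsets via the uniqueness and full support of the maximal measure exactly as you do (the paper phrases it as forcing $\mbar(X')<\mbar(X)$ and invoking clause~(a), while you invoke clause~(b) in the equality case --- the same dichotomy). No gaps.
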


\begin{proof}
Let $X$ and~$Y$ be copies of the unit circle $S^1$ in the plane,
with the euclidean metric.
Note that $X$ and~$Y$ are strictly quasihypermetric,
that normalised uniform measure on $X$ and~$Y$ is invariant,
and that therefore, by Theorem~\ref{2.2}, this measure is the unique
maximal measure on $X$ and~$Y$, and $\mbar(X) = \mbar(Y) < \infty$.
Form a metric space~$Z$ using the mechanism of Theorem~\ref{2.1},
setting the distance between each $x \in X$ and each $y \in Y$ to be~$c$,
where $2c = \mbar(X) + \mbar(Y)$.
By Theorem~\ref{2.1} part~(3), $Z$ is not strictly quasihypermetric.

Let $Z'$ be any proper compact subset of~$Z$,
and write $Z' = X' \cup Y'$ for suitable compact subsets
$X' \subseteq X$ and $Y' \subseteq Y$, at least one of which is proper.
If either $X'$ or~$Y'$ has no maximal measure,
then Theorem~\ref{2.1} part~(3) implies immediately that $Z'$ is
strictly quasihypermetric. If $X'$ and~$Y'$ both have maximal measures,
assume without loss of generality that $X'$ is a proper subset of~$X$.
Suppose that $\mbar(X') = \mbar(X)$.
Then the maximal measure on~$X'$ is also a maximal measure on~$X$,
but is certainly not uniform measure, and this contradicts the uniqueness
given by Theorem~\ref{2.2}. Therefore, $\mbar(X') < \mbar(X)$.
But now $2c > \mbar(X') + \mbar(Y')$, so Theorem~\ref{2.1} part~(3)
implies again that $Z'$ is strictly quasihypermetric.
\end{proof}

\providecommand{\WileyBibTextsc}{}
\let\textsc\WileyBibTextsc
\providecommand{\othercit}{}
\providecommand{\jr}[1]{#1}
\providecommand{\etal}{~et~al.}

\end{document}